\newcommand{\tr}[0]{\operatorname{tr}}
\newcommand{\Ric}[0]{\operatorname{Ric}}
\newcommand{\Rm}[0]{\operatorname{Rm}}
\newtheorem{theorem}{Theorem}[section]
\newtheorem{lemma}[theorem]{Lemma}
\newtheorem{corollary}[theorem]{Corollary}
\title{Collapsing of Products Along the K\"ahler-Ricci Flow}
\author{Matthew Gill}
\date{}
\begin{document}
\maketitle

\bigskip

\begin{abstract}
Let $X = M \times E$ where $M$ is an $m$-dimensional K\"ahler manifold with negative first Chern class and $E$ is an $n$-dimensional complex torus. We obtain $C^\infty$ convergence of the normalized K\"ahler-Ricci flow on $X$ to a K\"ahler-Einstein metric on $M$. This strengthens a convergence result of Song-Weinkove and confirms their conjecture.
\end{abstract}

\bigskip

\section{Introduction}

Let $M$ be an $m$-dimensional K\"ahler manifold with negative first Chern class and let $E$ be an $n$-dimensional complex torus. Independently from Yau and Aubin, there exists a unique K\"ahler-Einstein metric $g_M$ on $M$ \cite{Yau, Au}. Fix a flat metric $g_E$ on $E$. Recall that we can associate a $(1,1)$-form $\omega$ to a K\"ahler metric $g$ by defining
\begin{equation}
\omega = \frac{\sqrt{-1}}{2\pi} g_{i \bar{j}} dz^i \wedge dz^{\bar{j}}.
\end{equation}
Throughout this paper, we will relate K\"ahler metrics $g$, $g_M$, \ldots with their K\"ahler forms $\omega$, $\omega_M$, \ldots using the obvious notation. We will also refer to $\omega$ as a K\"ahler metric since $\omega$ and $g$ uniquely determine each other. Additionally, a uniform constant $C, C', \ldots$ will be a constant depending only on the initial data whose definition my change from line to line.

Let $X = M \times E$ and define projection maps $\pi_M : X \to M$ and $\pi_E : X \to E$. Let $\omega_0$ be any K\"ahler metric on $X$ and consider the normalized K\"ahler-Ricci flow

\begin{equation} \label{nkrf}
\frac{\partial}{\partial t} \omega = -\Ric\left(\omega\right) - \omega, \ \ \ \ \ \omega_{t=0} = \omega_0.
\end{equation}
Observe that $$\Ric\left(\pi^*_M \omega_M + \pi^*_E \omega_E \right) = -\pi^*_M \omega_M.$$
Hence $c_1\left(X\right) = - [ \pi^*_M \omega_M ] \leq 0$ and the flow \eqref{nkrf} exists for all time by the work of Tsuji \cite{Ts} and Tian-Zhang \cite{TZ}. Notice that in general $\omega_0$ is not a product. In the case when $\omega_0$ is a product, the work of Cao shows that the flow exists for all time and converges smoothly to a K\"ahler-Einstein metric on $M$ \cite{Cao}. We prove the following theorem.

\begin{theorem}
Let $\omega(t)$ be the solution to the normalized K\"ahler-Ricci flow \eqref{nkrf} with initial K\"ahler metric $\omega_0$ on $X = M \times E$. Then 

\begin{enumerate}[(a)]
\item $\omega\left(t\right)$ converges to $\pi^*_M \omega_M$ in $C^\infty \left(X , \omega_0 \right)$ as $t \to \infty$.
\item For any $z \in M$, let $E(z) = \pi^{-1}_M (z)$ denote the fiber above $z$. Then $ e^{t}\omega\left(t\right)|_{E(z)}  \to \omega_{flat}|_{E(z)}$ in $C^\infty \left(E(z), \omega_E\right)$ as $t \to \infty$, where $\omega_{flat}$ is a $(1,1)$-form on $X$ with $[\omega_{flat}] = [\omega_0]$ whose restriction to each fiber is a flat K\"ahler metric.
\end{enumerate}

\end{theorem}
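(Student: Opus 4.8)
The plan is to recast \eqref{nkrf} as a parabolic complex Monge--Ampère equation, establish uniform a priori estimates measured against a \emph{collapsing} family of reference metrics, and then upgrade to higher-order bounds and convergence. Since $\frac{\partial}{\partial t}[\omega] = -c_1(X) - [\omega] = [\pi^*_M\omega_M] - [\omega]$, the class evolves by $[\omega(t)] = e^{-t}[\omega_0] + (1-e^{-t})[\pi^*_M\omega_M]$; thus $\hat\omega_t := e^{-t}\omega_0 + (1-e^{-t})\pi^*_M\omega_M$ is a K\"ahler form in this class for every finite $t$, with $\hat\omega_t\to\pi^*_M\omega_M$, with $\frac{\partial}{\partial t}\hat\omega_t = \pi^*_M\omega_M - \hat\omega_t$ controlled by $\hat\omega_t$, and with $\hat\omega_t$ uniformly comparable, for $t$ large, to the product form $\pi^*_M\omega_M + e^{-t}\pi^*_E\omega_E$. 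Writing $\omega(t) = \hat\omega_t + \frac{\sqrt{-1}}{2\pi}\partial\bar\partial\varphi$ and taking the fixed smooth volume form $\Omega := (\pi^*_M\omega_M + \pi^*_E\omega_E)^{m+n}$ (which satisfies $-\Ric(\Omega) = \pi^*_M\omega_M$), the flow is equivalent to
$$\frac{\partial\varphi}{\partial t} = \log\frac{e^{nt}\,\omega(t)^{m+n}}{\Omega} - \varphi, \qquad \varphi|_{t=0}=0,$$
where the factor $e^{nt}$ balances the collapsing volume $\int_X\omega(t)^{m+n}\sim e^{-nt}$ so that $\varphi$ has a chance of being uniformly bounded.

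First I would prove the zeroth order bounds $\|\varphi\|_{C^0(X)}\le C$ and $\|\dot\varphi\|_{C^0(X)}\le C$ uniformly in $t$, by the maximum principle applied to $\varphi$ and to combinations such as $\dot\varphi$, $\varphi+\dot\varphi$ and $(t+1)\dot\varphi\pm\varphi$, using that $\Ric(\Omega)$ and $\frac{\partial}{\partial t}\hat\omega_t$ are controlled by $\hat\omega_t$. The technical heart of part (a), and the step I expect to be the main obstacle, is the uniform second order estimate $\tr_{\hat\omega_t}\omega(t)\le C$, i.e. $\omega(t)\le C\,\hat\omega_t$: one applies a parabolic Schwarz-lemma computation to $\log\tr_{\hat\omega_t}\omega - A\varphi$, but now the reference $\hat\omega_t$ degenerates in the fibre directions and is time dependent, so the usual argument must be redone with care — the degeneration interacts with the $e^{-t}$ scaling and with the error term $\frac{\partial}{\partial t}\hat\omega_t$ — and it is precisely the product structure, flat fibre and K\"ahler--Einstein base, that keeps the bisectional curvature of the background bounded below uniformly in $t$ and makes the dangerous terms absorbable. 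Combined with the lower bound $\omega(t)^{m+n} = e^{-nt}\Omega\,e^{\varphi+\dot\varphi}\ge c\,e^{-nt}\Omega\sim c\,\hat\omega_t^{m+n}$ coming from the $C^0$ estimates, this yields the uniform metric equivalence $C^{-1}\hat\omega_t\le\omega(t)\le C\,\hat\omega_t$, which already gives $\omega(t)\to\pi^*_M\omega_M$ in $C^0(X)$.

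For the higher order estimates I would exploit the flatness of the fibre. Lift the flow to the universal cover $M\times\mathbb{C}^n$, on which all data are invariant under a lattice acting by translations in the $\mathbb{C}^n$ factor, and rescale the fibre coordinate $z\mapsto e^{t/2}z$. This converts $\hat\omega_t$ into a family uniformly equivalent to the fixed, non-collapsing product metric $\pi^*_M\omega_M + \frac{\sqrt{-1}}{2\pi}\sum_i dz^i\wedge dz^{\bar i}$, turns the lattice into $e^{t/2}\mathbb{Z}^{2n}$ (which spreads out, so the quotient structure becomes invisible on any fixed ball for large $t$), and keeps the equation of K\"ahler--Ricci type and — by the previous step — uniformly parabolic with uniformly equivalent metrics. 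Hence local parabolic estimates apply uniformly: a Calabi-type $C^3$ estimate followed by parabolic Schauder/Evans--Krylov and bootstrapping give uniform $C^k_{\mathrm{loc}}$ bounds on the rescaled metric, which unravel into uniform $C^k(X,\omega_0)$ bounds on $\omega(t)$ in the $M$-directions and on $e^t\omega(t)$ in the $E$-directions. Making these bounds genuinely uniform in $t$ as the fundamental domain changes is a secondary difficulty, and is exactly where the hypothesis that $E$ be a flat torus, rather than a general Calabi--Yau fibre, enters.

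Finally I would upgrade the bounds to convergence. Writing $u := \varphi+\dot\varphi = \log\frac{e^{nt}\omega(t)^{m+n}}{\Omega}$, one computes $\frac{\partial}{\partial t}\dot\varphi = \Delta_{\omega(t)}u + \tr_{\omega(t)}\pi^*_M\omega_M - m - \dot\varphi$; combined with the uniform higher order bounds this lets me extract $C^\infty$-convergent subsequences $\omega(t_j)\to\omega_\infty$, show $\tr_{\omega(t)}\pi^*_M\omega_M\to m$ and hence $\dot\varphi\to 0$, identify $\omega_\infty$ with $\pi^*_M\omega_M$ (its $M$-part solves the K\"ahler--Einstein equation on $M$, whose solution is unique, and its fibre part vanishes by the collapsing), and conclude that the whole flow converges smoothly on $X$ to $\pi^*_M\omega_M$. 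The same argument applied to $e^t\omega(t)$ restricted to a fibre $E(z)$, where the induced class is the fixed class $[\omega_0]|_{E(z)}$, identifies the limit as the unique Ricci-flat, hence flat, K\"ahler metric in that class, which is the restriction of the asserted form $\omega_{\mathrm{flat}}$ on $X$, proving (b).
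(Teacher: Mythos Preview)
Your reduction to the parabolic Monge--Amp\`ere equation and the zeroth and second order estimates ($|\varphi|$, $|\dot\varphi|$ bounded, and $C^{-1}\hat\omega_t\le\omega\le C\hat\omega_t$ via a parabolic Schwarz computation) match the paper's argument essentially line for line.

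For the higher order estimates you take a genuinely different route. The paper's main technical contribution is a purely maximum-principle proof: it bounds $S=|\tilde\nabla g|^2_g$, then $|\Rm(g)|$, then uses Shi's estimates, and finally runs an inductive maximum-principle argument directly on $|\tilde\nabla^k\Psi|^2$ (with $\Psi=\Gamma-\tilde\Gamma$) against the collapsing reference $\tilde\omega_t=\omega_M+e^{-t}\omega_E$, exploiting heavily that the curvature of $\tilde\omega_t$ lives entirely on $M$ and is therefore bounded in the $g$-norm uniformly in $t$. Your rescaling-and-local-regularity approach (stretch the fibre, apply Evans--Krylov/Schauder on unit balls, unwind) is the more standard alternative and also works here precisely because the fibre is flat so the dilation is a biholomorphism; it is essentially the strategy of Fong--Zhang. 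The paper's method is more computational but avoids any localisation or change of scale; yours is conceptually quicker but hides the uniformity in the local parabolic theory.

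Your convergence argument, however, is where the proposal is weakest and departs most from the paper. The paper does \emph{not} extract subsequences: it proves directly, by a maximum principle applied to $e^t(\varphi-e^{-t}\rho)-At$ (where $\rho$ is the fibrewise potential producing $\omega_{\mathrm{flat}}$), that $|\varphi|\le C(1+t)e^{-t}$. Combined with the uniform higher order bounds and interpolation, this immediately yields $\partial\bar\partial\varphi\to 0$ in $C^\infty(X,\omega_0)$, hence $\omega(t)=\hat\omega_t+\frac{\sqrt{-1}}{2\pi}\partial\bar\partial\varphi\to\omega_M$. Your scheme --- extract $\omega(t_j)\to\omega_\infty$, argue $\tr_\omega\omega_M\to m$, deduce $\dot\varphi\to 0$, identify $\omega_\infty$ --- has the logic somewhat inverted and does not, as written, explain why $\tr_\omega\omega_M\to m$ along the \emph{whole} flow, nor why every subsequential limit $\varphi_\infty$ must have $\partial\bar\partial\varphi_\infty=0$ (you would still need to show the limit solves the K\"ahler--Einstein equation on $M$, and uniqueness there only helps once you know the limit is a pullback). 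This can be repaired, but the cleanest fix is precisely the paper's exponential decay of $\varphi$; you should supply that lemma. The same remark applies to part (b): the paper obtains the fibre convergence by restricting the higher order bounds and showing $\Delta_E(e^{-t}\varphi|_{E(z)}-\rho_z)\to 0$ explicitly, rather than by a compactness argument.
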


We remark that this theorem holds for any compact K\"ahler manifold that admits a flat metric, which includes certain quotients of complex tori. This theorem strengthens a convergence result of Song and Weinkove and confirms their conjecture \cite{SW3}. They prove that when $m = n = 1$, the convergence in $(a)$ takes place in $C^\beta(X, \omega_0)$ for any $\beta$ between $0$ and $1$, and that the convergence in $(b)$ takes place in $C^0\left(E(z), \omega_E \right)$. They conjecture that the convergence in this case is in fact $C^\infty$. This problem originates from the work of Song and Tian \cite{ST1}. They considered the normalized K\"ahler-Ricci flow on an elliptic surface $f : X \to \Sigma$ where some of the fibers may be singular. It was shown that the solution of the flow converges to a generalized K\"ahler-Einstein metric on the base $\Sigma$ in $C^{1,1}$. This result was generalized to the fibration $f : X \to X_{can}$ where $X$ is a nonsingular algebraic variety with semi-ample canonical bundle and $ X_{can}$ is its canonical model \cite{ST2}. Theorem 1.1 is a step towards strengthening this convergence result to $C^\infty$. We remark that Gross, Tosatti and Zhang have studied a similar manifold as in Theorem 1.1, but considered the case where the K\"ahler class of the metric tends to the boundary of the K\"ahler cone instead of evolving by the K\"ahler-Ricci flow \cite{GTZ}. Fong and Zhang have examined the rate of collapse of the fibers of a similar manifold along the K\"ahler-Ricci flow in a recent preprint \cite{FZ}.

Theorem 1.1 is related to viewing the K\"ahler-Ricci flow with surgery as an analytic Minimal Model Program (MMP) as conjectured by Song and Tian and proved in the weak sense \cite{ST3}. The idea of the MMP is that after several blow-downs and flips, a projective algebraic variety becomes either a minimal model or a Mori fiber space (an algebraic fibration $f: X \to B$ where the generic fibers are Fano). Recent results due to Song and Weinkove show that the K\"ahler-Ricci flow performs blow-downs as canonical surgical contractions in complex dimension 2 \cite{SW1} and in the case of the blow-up of orbifold points \cite{SW2}. Song and Yuan have given an example of the flow performing a flip \cite{SY}. Specific examples of collapsing along the flow have been investigated by Song and Weinkove in the case of a Hirzebruch surface \cite{SW0} and by Fong in the case of a projective bundle over a K\"ahler-Einstein manifold \cite{F1}.

After performing blow-downs and flips, the K\"ahler-Ricci flow is conjectured to produce either a minimal model or a Mori fiber space. If we continue the flow on a Mori fiber space, the flow is expected to collapse the fibers in finite time. An example of this was examined by Song, Sz\'{e}kelyhidi and Weinkove \cite{SSW}. The rate of collapse of the diameter was improved by Fong under an assumption on the Ricci curvature \cite{F2}. If we continue the flow on a minimal model, the flow exists for all time because the canonical class is nef. In this case, the rescaled flow may collapse in infinite time. This is the case considered in \cite{ST1, ST2, SW3, FZ} and in this paper. 

In section 2, we derive several estimates following \cite{SW3}. Section 3 contains new higher order estimates for the case of a degenerating metric using only the maximum principle. If the metric is not degenerating, then the work in section 3 most likely gives an alternate proof of the results in \cite{ShW}. For other examples of where higher order estimates were obtained using only the maximum principle, see \cite{Ch, DH, LSY}. In section 4, we obtain the convergence of $\omega$, completing the proof of the main theorem. 

\section{Estimates}

 First we establish reference metrics and reduce the flow to a parabolic complex Monge-Amp\`ere equation. The K\"ahler class of $\omega$ evolves as $$[\omega(t)] = e^{-t} [\omega_0] + \left( 1 - e^{-t} \right) [\omega_M].$$ This can be verified by substituting in to the normalized K\"ahler-Ricci flow. Note that we have written $\omega_M$ in place of $\pi^*_M \omega_M$ to simplify notation and we will continue to do so for the remainder of this paper. 

We define a family of reference metrics $\hat{\omega}_t$ in the class of $\omega(t)$ by $$\hat{\omega}_t = e^{-t} \omega_0 + \left( 1 - e^{-t} \right)  \omega_M.$$ Pick a smooth volume form $\Omega$ on $X$ such that 
\begin{equation}\label{eqnOmega}
\frac{\sqrt{-1}}{2\pi} \partial \bar{\partial} \log \Omega =  \omega_M, \ \ \ \int_X \Omega = \tbinom{m+n}{m} \int_X  \omega_M^m \wedge \omega_0^n.
\end{equation} 
This is possible since $\omega_M$ represents the negative of the first Chern class of $X$. Consider the parabolic complex Monge-Amp\`ere equation

\begin{equation}\label{pcma}
\frac{\partial}{\partial t} \varphi = \log \frac{e^{nt} \left(\hat{\omega}_t + \frac{\sqrt{-1}}{2\pi} \partial \bar{\partial} \varphi \right)^{m+n}}{\Omega} - \varphi, \ \ \ \ \hat{\omega}_t + \frac{\sqrt{-1}}{2\pi} \partial \bar{\partial} \varphi > 0, \ \ \ \ \varphi_{t=0} = 0.
\end{equation}
Then the solution $\varphi$ to \eqref{pcma} exists for all time and $\omega(t) = \hat{\omega}_t + \frac{\sqrt{-1}}{2\pi} \partial \bar{\partial} \varphi$ solves the normalized K\"ahler-Ricci flow \eqref{nkrf}.

We derive uniform estimates for the K\"ahler potential $\varphi$. The result of Lemma \ref{lemmaPhiBounds} and Lemma \ref{lemmaUnifEquiv} were proved in more general settings in the work of Song and Tian \cite{ST1}. See also \cite{FZ} in the case of a holomorphic submersion $X \to \Sigma$. Following the notation in \cite{SW3}, we provide a proof for the reader's convenience. 

\begin{lemma}\label{lemmaPhiBounds}
There exists $C > 0$ such that $X \times [0,\infty)$,
\begin{enumerate}[(a)]
\item $\left| \varphi \right| \leq C.$
\item $\left| \dot{\varphi} \right| \leq C.$
\item $\frac{1}{C} \hat{\omega}_t^{m+n} \leq \omega^{m+n} \leq C \hat{\omega}^{m+n}_t.$
\end{enumerate}
\end{lemma}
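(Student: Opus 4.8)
The plan is to establish (a) by the maximum principle on the spatial extrema of $\varphi$, then to differentiate the flow \eqref{pcma} in time and prove the bound on $\dot\varphi$, with (c) falling out along the way; the one genuinely delicate point will be the lower bound on $\dot\varphi$. First, a \emph{preliminary reduction}: since $\omega_M=\pi_M^*\omega_M$ is pulled back from the $m$-dimensional base, $\omega_M^{m+1}=0$, so
\[
\hat\omega_t^{\,m+n}=\sum_{j=n}^{m+n}\binom{m+n}{j}e^{-jt}(1-e^{-t})^{m+n-j}\,\omega_0^{\,j}\wedge\omega_M^{\,m+n-j}.
\]
Hence $e^{nt}\hat\omega_t^{\,m+n}$ is a smooth family of volume forms, equal to $\omega_0^{\,m+n}$ at $t=0$ and converging as $t\to\infty$ to the positive volume form $\binom{m+n}{n}\,\omega_0^{\,n}\wedge\omega_M^{\,m}$. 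By compactness of $[0,\infty]$ and the normalization \eqref{eqnOmega}, $C^{-1}\Omega\le e^{nt}\hat\omega_t^{\,m+n}\le C\Omega$ on $X\times[0,\infty)$. Consequently (c) is equivalent to $C^{-1}\le e^{nt}\omega^{m+n}/\Omega\le C$; and since \eqref{pcma} gives $\dot\varphi+\varphi=\log(e^{nt}\omega^{m+n}/\Omega)$, once (a) is proved, (b) and (c) are equivalent. For (a), let $h(t)=\max_X\varphi(\cdot,t)$: at a point realizing the maximum, $\tfrac{\sqrt{-1}}{2\pi}\partial\bar\partial\varphi\le0$, so $\omega\le\hat\omega_t$ and hence $\omega^{m+n}\le\hat\omega_t^{\,m+n}$ there; with the preliminary bound this gives $\dot\varphi\le\log C-\varphi$ there, so $\tfrac{d}{dt}h\le\log C-h$ in the sense of the maximum principle, and $h(0)=0$ forces $h\le\max(0,\log C)$. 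The lower bound is symmetric, using $\omega\ge\hat\omega_t$ at a spatial minimum.

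\emph{Upper bound on $\dot\varphi$.} Differentiating \eqref{pcma} in time and using $\partial_t\hat\omega_t=\omega_M-\hat\omega_t$ yields
\[
\Big(\tfrac{\partial}{\partial t}-\Delta_\omega\Big)\dot\varphi=\tr_\omega\omega_M+\Delta_\omega\varphi-m-\dot\varphi ,
\]
where $\Delta_\omega f=\tr_\omega\big(\tfrac{\sqrt{-1}}{2\pi}\partial\bar\partial f\big)$. Since $\hat\omega_t\ge(1-e^{-t})\omega_M$ we have $(1-e^{-t})\tr_\omega\omega_M\le\tr_\omega\hat\omega_t=(m+n)-\Delta_\omega\varphi$, so for $t$ bounded away from $0$ the right-hand side above is $\le C-Ke^{-t}\Delta_\omega\varphi-\dot\varphi$ for constants $C,K$. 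Writing $Ke^{-t}\Delta_\omega\varphi=\Delta_\omega(Ke^{-t}\varphi)$ and setting $Q=\dot\varphi+Ke^{-t}\varphi$, and invoking the bound on $\varphi$ from (a), one obtains $\big(\tfrac{\partial}{\partial t}-\Delta_\omega\big)Q\le C-(1-Ke^{-t})Q$. For $t$ large, $1-Ke^{-t}\ge\tfrac12$, so at a spatial maximum of $Q$ we get $\partial_tQ_{\max}\le C-\tfrac12 Q_{\max}$; thus $Q$, and hence $\dot\varphi$, is bounded above for large $t$, while on the remaining finite time interval $\dot\varphi$ is bounded by smoothness of the solution. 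This is the upper bound in (b), and with the preliminary reduction it gives the upper bound in (c).

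\emph{Lower bound on $\dot\varphi$ --- the main obstacle.} Simply discarding the nonnegative term $\tr_\omega\omega_M$ from the evolution of $\dot\varphi$ gives only $\dot\varphi+\varphi\ge-C-mt$, which is not uniform, so the lower bound must genuinely exploit that $\tr_\omega\omega_M$ is forced to be large precisely where the volume form $\omega^{m+n}$ degenerates. I would work with $u=\dot\varphi+\varphi$, which satisfies $\big(\tfrac{\partial}{\partial t}-\Delta_\omega\big)u=\tr_\omega\omega_M-m$, and estimate $\tr_\omega\omega_M$ from below at a spatial minimum of $u$. Let $\omega_P=\pi_M^*\omega_M+\pi_E^*\omega_E$ be the fixed product K\"ahler metric. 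By the arithmetic--geometric mean inequality $\tr_\omega\omega_P\ge(m+n)\big(\omega_P^{\,m+n}/\omega^{m+n}\big)^{1/(m+n)}$, which blows up wherever $\omega^{m+n}$ is small, i.e.\ wherever $u$ is very negative. To pass from $\omega_P$ to its base part $\omega_M$ one must control $\tr_\omega(\pi_E^*\omega_E)$ from above; since $\omega_E$ is flat, a Schwarz-lemma type argument via the parabolic maximum principle should bound it by $Ce^t$, which is the right order for fibers collapsing at rate $e^{-t}$. Feeding the resulting inequality $\tr_\omega\omega_M\ge f(u_{\min})$, with $f$ increasing to $+\infty$ as $u_{\min}\to-\infty$, into $\partial_tu_{\min}\ge\tr_\omega\omega_M\big|_{\min}-m$ forces $u_{\min}$ upward whenever it dips too low, yielding $u\ge-C$ and hence $\dot\varphi\ge-C$ by (a). I expect the quantitative control of the fiber directions and the verification that the resulting $f$ is strong enough to beat the factor $e^{-nt}$ in the volume form to be the crux; the rest is routine ODE comparison. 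These estimates appear, in greater generality, in Song--Tian \cite{ST1}, and the argument follows the exposition in \cite{SW3}.
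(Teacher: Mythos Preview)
Your preliminary volume comparison and the proof of (a) are correct and match the paper. The upper bound on $\dot\varphi$ also works, though it is needlessly elaborate: since $\omega_0$ is K\"ahler on $X$ there is a \emph{fixed} $C_0>1$ with $\omega_M\le C_0\hat\omega_t$ for all $t\ge0$, and the paper simply applies the maximum principle to $Q_1=\dot\varphi-(C_0-1)\varphi$, avoiding time-dependent coefficients and the split into small and large $t$.

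The lower bound on $\dot\varphi$, however, has a real gap, and it is exactly the point you flag as ``the crux''. With AM--GM applied to the \emph{fixed} product metric $\omega_P=\omega_M+\omega_E$ together with the Schwarz bound $\tr_\omega(\pi_E^*\omega_E)\le Ce^t$, you get at the spatial minimum of $u=\dot\varphi+\varphi$
\[
\tr_\omega\omega_M \ \ge\ \tr_\omega\omega_P-\tr_\omega\omega_E \ \ge\ c\,e^{(nt-u)/(m+n)}-Ce^{t}.
\]
Since $n/(m+n)<1$, for each fixed $u$ the right-hand side tends to $-\infty$ as $t\to\infty$; the threshold at which it becomes positive is $u\approx -mt$. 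Feeding this into $\partial_t u_{\min}\ge\tr_\omega\omega_M-m$ therefore gives only $u_{\min}(t)\ge -mt-C'$, precisely the non-uniform bound you already discarded. No time-independent $f$ with the property you want exists, so the ODE comparison does not close.

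The paper's device is to run AM--GM against the \emph{evolving} reference $\hat\omega_t$ rather than a fixed metric. Because $e^{nt}\hat\omega_t^{\,m+n}$ is uniformly comparable to $\Omega$ (your preliminary step), one has
\[
\tr_\omega\hat\omega_t \ \ge\ (m+n)\Big(\tfrac{\hat\omega_t^{\,m+n}}{\omega^{m+n}}\Big)^{1/(m+n)} \ \ge\ c\,e^{-(\dot\varphi+\varphi)/(m+n)},
\]
with no competing $e^t$. One then takes $Q_2=\dot\varphi+(m+1)\varphi$: its heat-operator evolution contains $+m\,\tr_\omega\hat\omega_t$, and at a minimum of $Q_2$ the maximum principle yields $\tr_\omega\hat\omega_t\le (m+n+1)-\dot\varphi$. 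Combining the two displays gives $c\,e^{-(\dot\varphi+\varphi)/(m+n)}\le C-\dot\varphi$, which (with $|\varphi|\le C$) forces $\dot\varphi\ge -C$. No Schwarz lemma, no fiber estimate, no splitting $\omega_P=\omega_M+\omega_E$ is needed; the argument lives entirely in $\varphi$, $\dot\varphi$, and $\tr_\omega\hat\omega_t$.
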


\begin{proof}

We begin by calculating 
\begin{equation}
e^{nt}\hat{\omega}^{m+n}_t   =  e^{-mt} \omega^{m+n}_0 + \tbinom{m+n}{1}e^{-(m-1)}\left(1 - e^{-t}\right) \omega^{m+n-1}_0 \wedge \omega_M + \ldots + \tbinom{m+n}{m}\left(1 - e^{-t} \right)^m \omega^n_0 \wedge \omega^m_M.
\end{equation}
This equation implies that
\begin{equation}\label{volBound}
\frac{1}{C} \Omega\leq e^{nt} \hat{\omega}^{m+n}_t \leq C \Omega.
\end{equation}

To obtain the upper bound for $\varphi$, assume that $\varphi$ attains a maximum at a point $(z_0, t_0)$ with $t_0 > 0$. At that point, the maximum principle implies 
\begin{equation}
0 \leq \frac{\partial}{\partial t} \varphi \leq \log \frac{e^{nt}\hat{\omega}^{m+n}_t}{\Omega} - \varphi \leq \log C - \varphi.
\end{equation}
Thus we find $\varphi \leq \log C$, giving the upper bound. Similarly, we obtain a lower bound giving (a).

To prove (b), we calculate the evolution equation of $\dot{\varphi}$ to be 
\begin{equation}\label{evoPhiDot}
\left( \frac{\partial}{\partial t} - \Delta \right) \dot{\varphi} = \tr_{\omega} \left(\omega_M - \hat{\omega}_t \right) + n - \dot{\varphi}.
\end{equation}
Note that by the definition of $\hat{\omega}_t$ there exists a constant $C_0 > 1$ such that $\omega_M \leq C_0 \hat{\omega}_t$ (however it is not true that there exists $C_0 > 0$ such that $\frac{1}{C_0} \hat{\omega}_t \leq \omega_M$ since $\omega_M$ is degenerate). Then at the maximum of the quantity $Q_1 = \dot{\varphi} - (C_0 - 1)\varphi$,

\begin{eqnarray}
0 & \leq & \left( \frac{\partial}{\partial t} - \Delta \right) Q_1  =  \tr_{\omega}\left(\omega_M - \hat{\omega}_t \right) + n - \dot{\varphi} - \left(C_0 - 1\right) \dot{\varphi} + \left(C_0 - 1 \right) \Delta \varphi \nonumber \\
& \leq & \left(C_0 - 1\right) \tr_{\omega} \hat{\omega}_t + n - C_0 \dot{\varphi} + \left( C_0 - 1 \right) \tr_{\omega} \left(\omega - \hat{\omega}_t\right) \nonumber \\
& \leq & n + \left(C_0 - 1\right)\left(m + n\right) - C_0 \dot{\varphi}. 
\end{eqnarray}
Hence $Q_1$ is bounded above, and so is $\dot{\varphi}$ by (a).

To obtain the lower bound for $\dot{\varphi}$, we define the quantity $Q_2 = \dot{\varphi} + \left(m+1\right) \varphi$. Working at a point where $Q_2$ achieves a minimum,
\begin{eqnarray}
0 &\geq& \left(\frac{\partial}{\partial t} - \Delta \right) Q_2  =  \tr_{\omega} \left( \omega_M - \hat{\omega}_t \right) + n - \dot{\varphi} + \left(1 + m\right) \dot{\varphi} - \left( m + 1 \right) \tr_{\omega} \left( \omega - \hat{\omega}_t \right) \nonumber \\
& \geq & m \left( \tr_{\omega} \hat{\omega}_t + \dot{\varphi} - \left( m + n + 1\right) \right).
\end{eqnarray}
Using the arithmetic-geometric mean inequality and \eqref{volBound},
\begin{equation}
e^{-\frac{\left(\dot{\varphi} + \varphi \right)}{m+n}} = \left( \frac{\Omega}{e^{nt} \omega^{m+n}} \right)^{\frac{1}{m+n}} \leq C \left( \frac{\hat{\omega}_t^{m+n}}{\omega^{m+n}} \right)^{\frac{1}{m+n}} \leq C \tr_\omega \hat{\omega}_t \leq C - \dot{\varphi}.
\end{equation}
This gives a uniform lower bound for $\dot{\varphi}$ at $(z_0, t_0)$, and hence a uniform lower bound for $\dot{\varphi}$.

Finally, for (c), using (a), (b) and \eqref{pcma} we have
\begin{equation}
\frac{1}{C} \leq \frac{e^{nt}\omega^{m+n}}{\Omega} \leq C,
\end{equation}
completing the proof of the lemma.
\end{proof}

Recall that we say two metrics $\omega_1$ and $\omega_2$ are uniformly equivalent if there exists a constant $C > 0$ such that $\frac{1}{C}\omega_2 \leq \omega_1 \leq C \omega_2$.  We now show that $\omega$ is uniformly equivalent to $\hat{\omega}_t$. Although the following lemma is known in more generality (see \cite{ST1}, \cite{FZ}), we provide a proof for the reader's convenience. We introduce another family of reference metrics
\begin{equation}
\tilde{\omega}_t = \omega_M + e^{-t} \omega_E.
\end{equation}
By writing $\tilde{\omega}_0 = \omega_M + \omega_E$ and $\tilde{\omega_t} = e^{-t} \tilde{\omega_0} + \left ( 1 - e^{-t} \right) \omega_M$, it is easy to see that $\hat{\omega}_t$ and $\tilde{\omega}_t$ are uniformly equivalent. We choose $\tilde{\omega}_t$ so that its curvature tensor vanishes on $E$ which will be useful for the remainder of this paper.

\begin{lemma}\label{lemmaUnifEquiv}
The metrics $\omega$ and $\tilde{\omega}_t$ are uniformly equivalent, i.e. there exists $C > 0$ such that on $X \times [0, \infty)$,
\begin{equation}
\frac{1}{C} \tilde{\omega}_t \leq \omega \leq C \tilde{\omega}_t.
\end{equation}
\end{lemma}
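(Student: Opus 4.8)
The plan is to combine a parabolic Schwarz-lemma computation with the volume bound already in hand. Since $\hat{\omega}_t$ and $\tilde{\omega}_t$ are uniformly equivalent, Lemma~\ref{lemmaPhiBounds}(c) gives $\tfrac1C\tilde{\omega}_t^{m+n}\le\omega^{m+n}\le C\tilde{\omega}_t^{m+n}$, so all three volume forms are uniformly comparable. I claim it therefore suffices to prove the \emph{single} inequality $\tr_{\tilde{\omega}_t}\omega\le C$. Indeed, diagonalising $\omega$ with respect to $\tilde{\omega}_t$ with eigenvalues $\lambda_1,\dots,\lambda_{m+n}>0$, one has $\sum_i\lambda_i=\tr_{\tilde{\omega}_t}\omega$ and $\prod_i\lambda_i=\omega^{m+n}/\tilde{\omega}_t^{m+n}\ge\tfrac1C$; then $\tr_{\tilde{\omega}_t}\omega\le C$ forces $\max_i\lambda_i\le C$ (the upper bound $\omega\le C\tilde{\omega}_t$) and also $\lambda_j=\big(\prod_i\lambda_i\big)\big/\prod_{i\ne j}\lambda_i\ge C^{-(m+n)}$ for each $j$ (the lower bound $\omega\ge C^{-1}\tilde{\omega}_t$).

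To bound $v:=\tr_{\tilde{\omega}_t}\omega$ from above I would run the standard parabolic Schwarz-lemma computation for the identity map $(X,\omega(t))\to(X,\tilde{\omega}_t)$, as in \cite{ST1,SW3}: writing $\Delta=\Delta_{\omega}$, a computation in which the $\Ric(\omega)$ terms cancel produces an inequality of the shape
\begin{equation*}
\left(\frac{\partial}{\partial t}-\Delta\right)\log v\ \le\ C_0\,\tr_\omega\tilde{\omega}_t + C,
\end{equation*}
where $-C_0$ is a lower bound for the holomorphic bisectional curvature of $\tilde{\omega}_t$. The crucial point is that $C_0$ can be chosen \emph{independent of $t$}: in product coordinates $\tilde{\omega}_t$ corresponds to $\operatorname{diag}(g_M,e^{-t}g_E)$, a Riemannian product whose curvature tensor equals that of the fixed metric $\omega_M$ on the $M$-directions and vanishes on the $E$-directions, since the fibre factor is flat; in particular its bisectional curvature is bounded below uniformly in $t$. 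This is precisely why $\tilde{\omega}_t$ was introduced. One must also check that the time-dependence $\partial_t\tilde{\omega}_t=-e^{-t}\omega_E$ does no harm: its contribution to $\partial_t v$ is $e^{-t}\,\tilde g^{i\bar k}(g_E)_{k\bar\ell}\tilde g^{\ell\bar j}g_{i\bar j}$, and since $g_E$ is supported in the fibre directions where $\tilde g^{-1}=e^{t}g_E^{-1}$, this is a partial trace of $v$ itself and hence contributes at most $1$ to $(\partial_t-\Delta)\log v$.

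Next I would apply the maximum principle to $Q:=\log v - A\varphi$ for a large constant $A$. Using $-\Delta\varphi=\tr_\omega\hat{\omega}_t-(m+n)$, the bound $|\dot{\varphi}|\le C$ from Lemma~\ref{lemmaPhiBounds}(b), and $\tr_\omega\hat{\omega}_t\ge c\,\tr_\omega\tilde{\omega}_t$ (from the uniform equivalence of $\hat{\omega}_t$ and $\tilde{\omega}_t$), one gets
\begin{equation*}
\left(\frac{\partial}{\partial t}-\Delta\right)Q\ \le\ (C_0-Ac)\,\tr_\omega\tilde{\omega}_t + C_A,
\end{equation*}
so choosing $A$ with $Ac\ge C_0+1$ makes the right-hand side $\le -\tr_\omega\tilde{\omega}_t+C_A$. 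At a maximum of $Q$ on $X\times[0,T]$ — attained at an interior time, since at $t=0$ we have $\varphi\equiv 0$ and $\omega_0,\tilde{\omega}_0$ smooth and comparable — the left side is $\ge 0$, forcing $\tr_\omega\tilde{\omega}_t\le C_A$ there. Combining this with the elementary inequality $\tr_{\tilde{\omega}_t}\omega\le(\tr_\omega\tilde{\omega}_t)^{m+n-1}\,\omega^{m+n}/\tilde{\omega}_t^{m+n}$ (immediate from the eigenvalue description) and the volume comparison shows $v$ is bounded at that point, hence $Q\le C$ there and therefore on all of $X\times[0,T]$; letting $T\to\infty$ and using $|\varphi|\le C$ yields $\tr_{\tilde{\omega}_t}\omega\le C$ on $X\times[0,\infty)$, which together with the first paragraph completes the lemma.

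The main obstacle is the one flagged in the second paragraph: making the curvature term in the Schwarz inequality uniform in $t$ even as the fibres collapse. The choice of $\tilde{\omega}_t$ as a genuine product with a flat fibre factor is exactly what resolves it; after that, the only other thing to monitor is the extra term coming from the time-dependence of the reference metric, which turns out to be controlled by $\tr_{\tilde{\omega}_t}\omega$ itself and so contributes a harmless bounded quantity.
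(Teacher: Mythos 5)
Your proof is correct and follows essentially the same route as the paper: reduce to an upper bound on $\tr_{\tilde{\omega}_t}\omega$ via the volume comparison from Lemma~\ref{lemmaPhiBounds}(c), run the parabolic Schwarz-lemma computation for $\log\tr_{\tilde{\omega}_t}\omega$, exploit the product structure (flat fibre) to get a time-independent curvature bound, and absorb the resulting trace term by $-\Delta\varphi$ in a maximum-principle quantity $\log\tr_{\tilde{\omega}_t}\omega - A\varphi$. The only cosmetic difference is that you bound the curvature contribution by $C_0\,\tr_\omega\tilde{\omega}_t$ via a uniform bisectional-curvature bound, whereas the paper uses the sharper observation that $\Rm(\tilde g_t)$ is supported entirely on the $M$-indices, giving the term $C_1\tr_\omega\omega_M$ and then invoking $\omega_M\le C_0\hat{\omega}_t$; both are then killed by the same $\tr_\omega\hat{\omega}_t$ coming from $-\Delta\varphi$, so the distinction is immaterial here.
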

We remark that since $\hat{\omega}_t$ is uniformly equivalent to $\tilde{\omega}_t$, we also have the following corollary.

\begin{corollary}\label{lemmaUnifEquiv2}
The metrics $\omega$ and $\hat{\omega}_t$ are uniformly equivalent.
\end{corollary}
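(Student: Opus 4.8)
The plan is to obtain Corollary~\ref{lemmaUnifEquiv2} as an immediate consequence of Lemma~\ref{lemmaUnifEquiv} together with the uniform equivalence of $\hat{\omega}_t$ and $\tilde{\omega}_t$, which was already observed just before the statement of Lemma~\ref{lemmaUnifEquiv} (by writing $\tilde\omega_t = e^{-t}\tilde\omega_0 + (1-e^{-t})\omega_M$ and comparing with $\hat\omega_t = e^{-t}\omega_0 + (1-e^{-t})\omega_M$, noting $\omega_0$ and $\tilde\omega_0$ are uniformly equivalent since both are smooth metrics on the compact manifold $X$). The only point to record is that ``uniformly equivalent'' is a transitive relation: if $\frac{1}{C_1}\omega_2 \leq \omega_1 \leq C_1 \omega_2$ and $\frac{1}{C_2}\omega_3 \leq \omega_2 \leq C_2 \omega_3$, then $\frac{1}{C_1 C_2}\omega_3 \leq \omega_1 \leq C_1 C_2 \omega_3$.

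Concretely, I would first invoke Lemma~\ref{lemmaUnifEquiv} to get a constant $C_1 > 0$ with $\frac{1}{C_1}\tilde\omega_t \leq \omega \leq C_1 \tilde\omega_t$ on $X \times [0,\infty)$. Next I would invoke the equivalence of the two reference families to fix $C_2 > 0$ with $\frac{1}{C_2}\hat\omega_t \leq \tilde\omega_t \leq C_2 \hat\omega_t$ on $X \times [0,\infty)$. Chaining the two chains of inequalities yields $\frac{1}{C_1 C_2}\hat\omega_t \leq \omega \leq C_1 C_2 \hat\omega_t$, and setting $C = C_1 C_2$ gives the claim. There is no real obstacle here: the statement is a formal corollary, and all the analytic content lives in Lemma~\ref{lemmaUnifEquiv}; the ``hardest'' part is merely bookkeeping the multiplicative constants, which I would state but not belabor.

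\begin{proof}
By Lemma~\ref{lemmaUnifEquiv} there is a constant $C_1 > 0$ such that $\frac{1}{C_1}\tilde{\omega}_t \leq \omega \leq C_1 \tilde{\omega}_t$ on $X \times [0,\infty)$. As noted above, $\hat{\omega}_t$ and $\tilde{\omega}_t$ are uniformly equivalent, so there is $C_2 > 0$ with $\frac{1}{C_2}\hat{\omega}_t \leq \tilde{\omega}_t \leq C_2 \hat{\omega}_t$ on $X \times [0,\infty)$. Combining these inequalities gives $\frac{1}{C_1 C_2}\hat{\omega}_t \leq \omega \leq C_1 C_2 \hat{\omega}_t$, and the corollary follows with $C = C_1 C_2$.
\end{proof}
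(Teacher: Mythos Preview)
Your proof is correct and follows exactly the paper's approach: the paper states the corollary immediately after remarking that ``since $\hat{\omega}_t$ is uniformly equivalent to $\tilde{\omega}_t$, we also have the following corollary,'' deferring all analytic content to Lemma~\ref{lemmaUnifEquiv}. Your write-up simply makes the transitivity step explicit, which is fine.
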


Now we will prove the above lemma using a method similar to Song and Weinkove. The main difference in the proof is that we need to be careful with the curvature tensor of $\tilde{\omega}_t$ due to the increase in dimension.

\begin{proof}

By Lemma \ref{lemmaPhiBounds} part (c), the lemma will follow by bounding $\tr_{\tilde{\omega}_t} \omega$ from above. We begin with the evolution equation for the quantity $\log \tr_{\tilde{\omega}_t} \omega$ from \cite{SW3}. This is analogous to Cao's \cite{Cao} second order estimate, which is the parabolic version of an elliptic estimate from Yau and Aubin \cite{Yau, Au}:
\begin{equation}\label{eqn4}
\left( \frac{\partial}{\partial t} - \Delta \right) \log \tr_{\tilde{\omega}_t} \omega \leq - \frac{1}{ \tr_{\tilde{\omega}_t} \omega}  g^{ \bar{l}k}{ R(\tilde{g}_t)_{k\bar{l}}}^{\bar{j}i}g_{i\bar{j}}.
\end{equation}

To control the Riemann curvature tensor of $\tilde{g}$, we choose product normal coordinates for $g_M$ and $g_E$. In these coordinates,

\begin{equation}\label{eqn5}
   R(\tilde{g}_t)_{k\bar{l}i\bar{j}} = \left\{
     \begin{array}{cl}
       R(g_M)_{k\bar{l}i\bar{j}} & : 1 \leq i, j, k, l \leq m \\
       0  & : \operatorname{else}
     \end{array}
   \right .
\end{equation}

We recall that an inequality of tensors $T_{k\bar{l}i\bar{j}} \leq S_{k\bar{l}i\bar{j}}$ in the Griffiths sense is defined as follows. For any vectors $X$ and $Y$ of type $T^{1,0}$, we have $T_{k\bar{l}i\bar{j}} X^k \overline{X^l} Y^i \overline{Y^j}\leq S_{k\bar{l}i\bar{j}}  X^k \overline{X^l} Y^i \overline{Y^j}$. Since $\Rm(g_M)$ (the Riemann curvature tensor of $g_M$, $R_{k\bar{l}i\bar{j}}$) is a fixed tensor on $M$, for every $X$ and $Y$ on $M$,
\begin{equation}
\left\vert R(g_M)_{k\bar{l}i\bar{j}}  X^k \overline{X^l} Y^i \overline{Y^j} \right\vert^2_{g_M} \leq \left\vert \Rm(g_M) \right\vert^2_{g_M} \left\vert X \right\vert^2_{g_M} \left\vert Y \right\vert^2_{g_M}.
\end{equation}
This gives the following inequality in the Griffiths sense
\begin{equation}\label{eqn6}
-R(g_M)_{k\bar{l}i\bar{j}} \leq C_1 (g_M)_{k\bar{l}} (g_M)_{i\bar{j}}.
\end{equation}
Applying \eqref{eqn5} and \eqref{eqn6} to \eqref{eqn4} gives 
\begin{eqnarray}\label{evoLogTrace}
\left( \frac{\partial}{\partial t} - \Delta \right) \log \tr_{\tilde{\omega}_t} \omega & \leq & \frac{1}{ \tr_{\tilde{\omega}_t} \omega}  \sum_{i,j,l,k,p,q = 1}^m C_1 g^{ \bar{l}k}g_{i\bar{j}}\tilde{g}_t^{\bar{q}i} \tilde{g}_t^{\bar{j}p} (g_M)_{k\bar{l}} (g_M)_{p\bar{q}} \nonumber \\
& = & C_1 \frac{1}{ \tr_{\tilde{\omega}_t} \omega} \left( \tr_{\omega} \omega_M\right) \sum_{i = 1}^m g_{i \bar{i}} \nonumber \\
& \leq & C_1  \frac{1}{ \tr_{\tilde{\omega}_t} \omega} \left( \tr_{\omega} \omega_M\right) \left(\tr_{\tilde{\omega}_t} \omega \right) \nonumber \\
& = & C _1 \tr_{\omega} \omega_M.
\end{eqnarray}

Recall that there exists $C_0 > 1$ such that $\omega_M \leq C_0 \hat{\omega}_t$. Now we define the quantity $Q_3 = \log \tr_{\tilde{\omega}_t} \omega - (C_0 C_1 + 1)\varphi$. Then at the maximum of $Q_3$,
\begin{eqnarray}
\left( \frac{\partial}{\partial t} - \Delta \right) Q_3 & \leq &  C _1 \tr_{\omega} \omega_M - (C_0 C_1 +1) \dot{\varphi} + (C_0 C_1 + 1) \tr_{\omega}\left(\omega - \hat{\omega}_t\right) \nonumber \\
& \leq & (C_0 C_1 + 1)(m+n) - (C_0 C_1 + 1) \dot{\varphi} - \tr_{\omega} \hat{\omega}_t \nonumber \\
& \leq & C - \frac{1}{C} \tr_{\tilde{\omega}_t} \omega.
\end{eqnarray}
To get the last line we use the fact that $\dot{\varphi}$ is bounded from Lemma \ref{lemmaPhiBounds} part (b), that $\tilde{\omega}_t$ and $\hat{\omega}_t$ are uniformly equivalent, and Lemma \ref{lemmaPhiBounds} part (c). Using Lemma \ref{lemmaPhiBounds} part (a) and the maximum principle shows that $Q_3$ is bounded, hence so is $\tr_{\tilde{\omega}_t} \omega$.
\end{proof}

By choosing product normal coordinates for $g_M$ and $g_E$, $\partial_k (\tilde{g}_t)_{i\bar{j}} = 0$ for all $i$, $j$ and $k$ and for all $t \geq 0$. This implies that the Christoffel symbols for $\tilde{\omega}_t$ do not depend on $t$, hence we may write $\tilde{\nabla}$ for both $\nabla_{\tilde{g}_t}$ and $\nabla_{\tilde{g}_0}$ without ambiguity. This also implies that the curvature tensor ${R(\tilde{g}_t)_{i\bar{j}k}}^l$ does not depend on time. Using these facts, we prove the following lemma which we will make heavy use of for the remainder of the paper. We remark that the proof of the following lemma uses the product structure of the manifold in a very strong way.

\begin{lemma}\label{lemmaTildeCurv}
Let $\Rm(\tilde{g}_0)$ denote the Riemann curvature tensor of $\tilde{g}_0$, ${R(\tilde{g}_0)_{i\bar{j}k}}^l$. Then there exists a uniform $C(k) > 0$ for $k = 0, 1, 2, \ldots$ such that on $X \times [0,\infty)$,
\begin{equation}
\vert \tilde{\nabla}^k_{\mathbb{R}} \Rm(\tilde{g}_0) \vert^2 \leq C(k),
\end{equation}
where $\vert \cdot \vert$ denotes the norm with respect to $g(t)$ and where $\tilde{\nabla}_{\mathbb{R}}$ is the covariant derivative with respect to $\tilde{g}_0$ as a Riemannian metric. 
\end{lemma}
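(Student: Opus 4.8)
The plan is to show, via the product structure, that for every $k$ the tensor $\tilde{\nabla}^k_{\mathbb{R}}\Rm(\tilde{g}_0)$ is a fixed (i.e.\ $t$-independent) smooth tensor on $X$ whose components in product normal coordinates depend only on the coordinates of $M$ and vanish unless every index is an $M$-index. Such a tensor cannot detect the collapse of the $E$-directions, so a uniform bound on its $g(t)$-norm will follow from the uniform equivalence of $g(t)$ and $\tilde{g}_t$ established above.

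The first step is to note that the Riemannian covariant derivative $\tilde{\nabla}_{\mathbb{R}}$, acting on a complex tensor, is governed by the Christoffel symbols $\tilde{\Gamma}^k_{ij}$ of $\tilde{g}_0$ and their conjugates, since $\tilde{g}_0$ is K\"ahler (its mixed Christoffel symbols vanish). As already observed, these are independent of $t$; and because $\tilde{g}_0 = g_M + g_E$ is a product and product normal coordinates are used, $\tilde{\Gamma}^k_{ij}$ splits as the direct sum of the Christoffel symbols of $g_M$ and those of $g_E$, the latter vanishing. Hence if $T$ is a tensor on $X$ of the type described above, then so is $\tilde{\nabla}_{\mathbb{R}}T$: a derivative along an $E$- or $\bar{E}$-direction annihilates $T$ (the partial derivative vanishes since the components of $T$ are constant along $E$, and every Christoffel symbol with a mixed $M/E$ index vanishes), while a derivative along an $M$- or $\bar{M}$-direction only introduces the coefficients of $g_M$. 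Now I would induct on $k$: the base case $k = 0$ is exactly \eqref{eqn5}, and the inductive step is the statement just proved. This yields the structural claim, with $T_k := \tilde{\nabla}^k_{\mathbb{R}}\Rm(\tilde{g}_0)$ a fixed tensor of rank bounded in terms of $k$.

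To finish, by Lemma~\ref{lemmaUnifEquiv} the metrics $g(t)$ and $\tilde{g}_t$ are uniformly equivalent, so $|T_k|^2_{g(t)} \le C(k)\,|T_k|^2_{\tilde{g}_t}$ with $C(k)$ depending only on $k$. Because $\tilde{g}_t = g_M \oplus e^{-t} g_E$ is block-diagonal and every index of $T_k$ is an $M$-index, each contraction occurring in $|T_k|^2_{\tilde{g}_t}$ uses only the blocks $g_M$ and $g_M^{-1}$, so no power of $e^{-t}$ ever enters; therefore $|T_k|^2_{\tilde{g}_t}$ equals the pull-back under $\pi_M$ of $|\nabla^k_{g_M}\Rm(g_M)|^2_{g_M}$, a smooth function on the compact manifold $M$, and is bounded by a constant depending only on $k$. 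Combining these two estimates proves the lemma.

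I expect the main obstacle to be the structural claim, namely verifying that no $E$- or $\bar{E}$-index ever appears in $\tilde{\nabla}^k_{\mathbb{R}}\Rm(\tilde{g}_0)$; this is precisely where the product hypothesis is used in an essential way, since for a genuine (non-product) fibration the reference curvature would in general have components mixing base and fiber directions, and raising such a fiber index against $g(t)^{-1}$ would contribute a factor $e^{t}$ that destroys the bound. Once the structural claim is in place, the norm estimate itself is routine, relying only on uniform equivalence and the explicit block form of $\tilde{\omega}_t$.
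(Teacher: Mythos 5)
Your proof is correct and takes essentially the same route as the paper's: identify $\tilde{\nabla}^k_{\mathbb{R}}\Rm(\tilde{g}_0)$ as a fixed $t$-independent tensor with only $M$-indices, bound its $g(t)$-norm by its $\tilde{g}_t$-norm via Lemma~\ref{lemmaUnifEquiv}, and observe that the latter reduces to $|\nabla^k_{g_M,\mathbb{R}}\Rm(g_M)|^2_{g_M}$, a bounded function on the compact manifold $M$. You make explicit, by induction, the structural claim that the paper compresses into the single equality $\vert \nabla^k_{\tilde{g}_t, \mathbb{R}} \Rm(\tilde{g}_t) \vert^2_{\tilde{g}_t} = \vert \nabla^k_{g_M, \mathbb{R}} \Rm(g_M) \vert^2_{g_M}$.
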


\begin{proof}
Recall that $\tilde{g}_t$ is a product metric on $X = M \times E$. Using the fact that $\Rm(\tilde{g}_t)$ does not depend on time and Lemma \ref{lemmaUnifEquiv},
\begin{equation}
\vert \tilde{\nabla}^k_{\mathbb{R}} \Rm(\tilde{g}_0) \vert^2 =  \vert \nabla^k_{\tilde{g}_t, \mathbb{R}} \Rm(\tilde{g}_t) \vert^2_g  \leq  C  \vert \nabla^k_{\tilde{g}_t, \mathbb{R}} \Rm(\tilde{g}_t) \vert^2_{\tilde{g}_t}.
\end{equation}
Then because $g_E$ is a flat metric on $E$,
\begin{equation}
\vert \tilde{\nabla}^k_{\mathbb{R}} \Rm(\tilde{g}_0) \vert^2  \leq C \vert \nabla^k_{\tilde{g}_t, \mathbb{R}} \Rm(\tilde{g}_t) \vert^2_{\tilde{g}_t} = C \vert  \nabla^k_{g_M, \mathbb{R}} \Rm(g_M) \vert^2_{g_M} \leq C(k).
\end{equation}

\end{proof}

We will now bound the first derivative of the metric $\omega$ following the method of \cite{SW3}.

\begin{lemma}\label{lemmaGradEst}
There exists a uniform $C > 0$ such that on $X \times [0,\infty)$,
\begin{equation}
S := \vert \tilde{\nabla} g \vert^2 \leq C \ \ \ and \ \ \  \vert \tilde{\nabla} g \vert_{\tilde{g}_0}^2 \leq C
\end{equation}
where $\vert \cdot \vert$ and $\vert \cdot \vert_{\tilde{g}_0}$ denote the norms with respect to $g(t)$ and $\tilde{g}_0$ respectively. Moreover, 
\begin{equation}\label{ineqS}
\left(\frac{\partial}{\partial t} - \Delta \right) S \leq - \frac{1}{2}|\Rm (g)|^2 + C'
\end{equation}
for some uniform $C' > 0$ and where $\Rm(g)$ denotes the Riemann curvature tensor of $g$, ${R_{i\bar{j}k}}^l$.
\end{lemma}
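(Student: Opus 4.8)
The plan is to run a Calabi-Yau type third-order computation adapted to the parabolic setting, using $\tilde\omega_t$ (whose curvature is time-independent and, by Lemma~\ref{lemmaTildeCurv}, uniformly bounded together with all its derivatives) as the background connection. Define $S = |\tilde\nabla g|^2$, where the norm is taken with respect to $g(t)$ and $\tilde\nabla$ is the (time-independent) covariant derivative of $\tilde g_0$; concretely $S = g^{i\bar j} g^{k\bar l} g^{p\bar q} \tilde\nabla_i g_{k\bar p} \overline{\tilde\nabla_j g_{l\bar q}}$ up to the usual index bookkeeping. The first step is to derive the evolution inequality~\eqref{ineqS}. Writing $\omega = \tilde\omega_t + (\text{something})$, or more directly computing $(\partial_t - \Delta)S$ by commuting $\tilde\nabla$ past the flow equation $\partial_t g_{i\bar j} = -R_{i\bar j} - g_{i\bar j}$, one gets the schematic structure $(\partial_t - \Delta)S = -|\tilde\nabla\tilde\nabla g|^2 - |\tilde\nabla\overline{\tilde\nabla} g|^2 + (\text{curvature of }\tilde g)\ast \tilde\nabla g \ast \tilde\nabla g + (\text{lower order})$. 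The key algebraic point, exactly as in Cao's parabolic Calabi estimate and in \cite{SW3}, is that the bad third-order gradient terms are absorbed by the two negative square terms, and that $|\Rm(g)|^2$ is controlled by $|\tilde\nabla\tilde\nabla g|^2$ up to terms involving $\Rm(\tilde g_0)$ and $\tilde\nabla g$: schematically $\Rm(g) = \tilde\nabla\overline{\tilde\nabla} g + g^{-1}\ast\tilde\nabla g\ast\tilde\nabla g + \Rm(\tilde g_0)$. Using Lemma~\ref{lemmaUnifEquiv} (so that $g$ and $\tilde g_t$, hence $\tilde g_0$, are uniformly equivalent and raising/lowering indices costs only uniform constants) and Lemma~\ref{lemmaTildeCurv} to bound all $\tilde g_0$-curvature terms, the terms that are quadratic in $\tilde\nabla g$ but have no extra derivative are bounded by $C S$, and combining these yields $(\partial_t - \Delta)S \leq -\tfrac12|\Rm(g)|^2 + C'(1 + S)$; a harmless rescaling or a slightly sharper absorption removes the $S$ on the right, or one simply keeps it and feeds it into the next step.

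The second step is to bound $S$ itself using the maximum principle, which requires a good auxiliary quantity to kill the $C'(1+S)$ (or $C'$) term. The standard device is to use $\tr_{\tilde\omega_t}\omega$, which is uniformly bounded above and below away from zero by Lemma~\ref{lemmaUnifEquiv} and Corollary~\ref{lemmaUnifEquiv2}, and which satisfies an evolution inequality of Calabi type: from~\eqref{eqn4} and the computation in~\eqref{evoLogTrace}, one has $(\partial_t - \Delta)\tr_{\tilde\omega_t}\omega \leq (\text{const})\tr_\omega\omega_M\cdot\tr_{\tilde\omega_t}\omega + \dots$, but more usefully $\Delta \tr_{\tilde\omega_t}\omega$ contains a term $\gtrsim \tfrac1C |\tilde\nabla g|^2 = \tfrac1C S$ (this is the positive term that, in the elliptic/parabolic $C^2$ estimate, dominates). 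Therefore consider $Q = S/(\tr_{\tilde\omega_t}\omega) + A\,\tr_{\tilde\omega_t}\omega$, or more simply $Q = S - A\,\tr_{\tilde\omega_t}\omega$ for a large uniform constant $A$ — at an interior spatial maximum of $Q$, the negative $-\tfrac{A}{C}S$ coming from $-A\,\Delta\tr_{\tilde\omega_t}\omega$ beats the $+C'S$, forcing $S \leq C''$, and then boundedness of $\tr_{\tilde\omega_t}\omega$ gives $S \leq C$ on all of $X\times[0,\infty)$. (One must double-check there is no genuine boundary in $t$: at $t=0$, $S$ is bounded by the initial data, so the maximum principle on $X\times[0,T]$ with $T\to\infty$ applies.) The bound $|\tilde\nabla g|^2_{\tilde g_0} \leq C$ then follows immediately from $|\tilde\nabla g|^2_{\tilde g_0} \leq C|\tilde\nabla g|^2_{g} = CS$ by Lemma~\ref{lemmaUnifEquiv}.

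The main obstacle is the bookkeeping in the third-order computation: one must verify that \emph{all} the third-order terms produced by $(\partial_t - \Delta)S$ — in particular the cross terms arising because $\tilde\nabla$ is not the Levi-Civita connection of $g(t)$, and the commutator terms from moving $\tilde\nabla$ past $\partial_t$ and past $\Delta_{g(t)}$ — are either manifestly of the form $-|\tilde\nabla\tilde\nabla g|^2$, $-|\tilde\nabla\overline{\tilde\nabla}g|^2$ (good), or can be Cauchy-Schwarz'd against these two at the cost of a term $\leq C(1+S)$ (using the uniform bounds on $\tilde g_0$-curvature and its derivatives from Lemma~\ref{lemmaTildeCurv}). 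The only genuinely new feature compared to \cite{SW3} is dimensional — $E$ is now $n$-dimensional rather than $1$-dimensional — but since $\tilde g_0$ is still a product with the flat factor $g_E$, Lemma~\ref{lemmaTildeCurv} shows the relevant curvature quantities remain uniformly bounded, so the argument goes through verbatim with all the constants now depending on $m$ and $n$. I expect the proof to follow \cite{SW3} closely, with the extra care flagged in the remark preceding the lemma amounting precisely to invoking Lemma~\ref{lemmaTildeCurv} at each place where a $\tilde g$-curvature term appears.
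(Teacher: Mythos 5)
Your overall strategy coincides with the paper's: derive a Phong--Sesum--Sturm--type evolution inequality for $S$, invoke Lemma~\ref{lemmaTildeCurv} to control every $\tilde g_0$-curvature term, couple with $\tr_{\tilde\omega_t}\omega$ to build a barrier, and recognize that the surviving negative square term is, up to bounded $\tilde g_0$-curvature, $|\Rm(g)|^2$. However, there is a genuine sign error in your barrier. You propose the ``simpler'' quantity $Q = S - A\,\tr_{\tilde\omega_t}\omega$ and assert that the good term $-\tfrac{A}{C}S$ arises from $-A\Delta\tr_{\tilde\omega_t}\omega$. But the parabolic maximum principle gives $(\partial_t-\Delta)Q\geq 0$ at an interior space-time maximum, and
\begin{equation*}
(\partial_t-\Delta)\bigl(S - A\,\tr_{\tilde\omega_t}\omega\bigr) = (\partial_t-\Delta)S - A\,(\partial_t-\Delta)\tr_{\tilde\omega_t}\omega.
\end{equation*}
From \eqref{eqn7} one has $(\partial_t-\Delta)\tr_{\tilde\omega_t}\omega \leq C'' - \tfrac{1}{C'''}S$ (the good term is the negative Bochner square), so the $-A$ coefficient flips its sign: you \emph{gain} $+\tfrac{A}{C'''}S$ on the right-hand side, which makes the inequality strictly worse, not better, and the maximum principle yields nothing. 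The correct barrier is $Q = S + A\,\tr_{\tilde\omega_t}\omega$, precisely the paper's $Q_4$, so that the negative Bochner square enters with the right sign. (Your alternative $Q = S/\tr_{\tilde\omega_t}\omega + A\,\tr_{\tilde\omega_t}\omega$ at least carries a $+A$, but it is more complicated than necessary and is not the one you advocate.)

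Two smaller points. First, you cannot ``rescale away'' the $+CS$ to obtain \eqref{ineqS} directly; as in the paper, one first bounds $S$ via the barrier argument, and only then substitutes that bound to turn $+CS$ into a constant. Second, the negative square term that controls $|\Rm(g)|^2$ is the mixed derivative $|\bar\nabla\Psi|^2$ (your $|\tilde\nabla\overline{\tilde\nabla}g|^2$ up to quadratic corrections in $\Psi$), not the unbarred $|\tilde\nabla\tilde\nabla g|^2$ that you single out; your subsequent schematic $\Rm(g)=\tilde\nabla\overline{\tilde\nabla}g + g^{-1}\ast\tilde\nabla g\ast\tilde\nabla g + \Rm(\tilde g_0)$ is the right one.
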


\begin{proof}
We will derive the evolution equation of $S$ using a formula of Phong-Sesum-Sturm \cite{PSS}. We follow the notation of \cite{PSS, SW3}. Let $\Psi^k_{ij} = \Gamma^k_{ij} - \tilde{\Gamma}^k_{ij} = g^{\bar{l}k} \tilde{\nabla}_i g_{j\bar{l}}$, where $\Gamma$ and $\tilde{\Gamma}$ are the Christoffel symbols for $g(t)$ and $\tilde{g}_0$ respectively. Then we have 
\begin{equation}
S = \vert \Psi \vert^2 =  g^{\bar{j}i} g^{\bar{l}k} g_{p\bar{q}} \Psi^p_{ik} \overline{\Psi^q_{jl}}.
\end{equation}
Before computing the evolution equation of $S$, we need the evolution equation of $\Psi^k_{ij}$.
\begin{equation}
\frac{\partial}{\partial t} \Psi^k_{ij}  =  \frac{\partial}{\partial t} \left( g^{\bar{l} k} \partial_i g_{jl} - \tilde{g}^{\bar{l} k} \partial_i \tilde{g}_{j \bar{l}}\right) = g^{\bar{l} k} \partial_i \left( -R_{j\bar{l}} - g_{j\bar{l}} \right) = -\nabla_i {R_j}^k.
\end{equation}
We also compute the rough Laplacian of $\Psi_{ij}^k$:
\begin{equation}
\Delta \Psi^k_{ij} = g^{\bar{q}p} \nabla_p \nabla_{\bar{q}} \Psi^k_{ij}= \nabla^{\bar{q}} \left( {R(\tilde{g}_0)_{i\bar{q}j}}^k - {R_{i\bar{q}j}}^k \right) = \nabla^{\bar{q}} {R(\tilde{g}_0)_{i\bar{q}j}}^k - \nabla_i {R_j}^k.
\end{equation}
Hence we have
\begin{equation}\label{evoPsi}
\left( \frac{\partial}{\partial t} - \Delta \right) \Psi^k_{ij} = -\nabla^{\bar{q}} {R(\tilde{g}_0)_{i\bar{q}j}}^k.
\end{equation}
Now we calculate the evolution of $S$.
\begin{eqnarray}\label{timeS}
\frac{\partial}{\partial t} S & = & \frac{\partial}{\partial t} \left( g^{\bar{j}i} g^{\bar{l}k} g_{p\bar{q}} \Psi^p_{ik} \overline{\Psi^q_{jl}}\right) \nonumber \\
& = & -\left(- R^{\bar{j}i} -g^{\bar{j}i}\right)  g^{\bar{l}k} g_{p\bar{q}} \Psi^p_{ik} \overline{\Psi^q_{jl}} - g^{\bar{j}i} \left( -R^{\bar{l}k} - g^{\bar{l}k}\right)  g_{p\bar{q}} \Psi^p_{ik} \overline{\Psi^q_{jl}} \nonumber \\
& & \ \ + g^{\bar{j}i} g^{\bar{l}k} \left( -R_{p\bar{q}} - g_{p\bar{q}} \right) \Psi^p_{ik} \overline{\Psi^q_{jl}} + 2 \operatorname{Re} \left(g^{\bar{j}i} g^{\bar{l}k} g_{p\bar{q}} \left( \Delta \Psi^p_{ik} - \nabla^{\bar{s}} {R(\tilde{g}_0)_{i\bar{s}k}}^p\right) \overline{\Psi^q_{jl}}\right)
\end{eqnarray}
Taking the Laplacian of $S$,
\begin{equation}\label{LaplaceS}
\Delta S  = \vert \nabla \Psi \vert^2 + \vert \bar{\nabla} \Psi \vert^2 + g^{\bar{j}i} g^{\bar{l}k} g_{p\bar{q}} \left( \left(\Delta \Psi^p_{ik} \right) \overline{\Psi^q_{jl}} + \Psi^p_{ik} \overline{\left(\bar{\Delta} \Psi^q_{jl}\right)}\right).
\end{equation}
We have the following commutation formula:
\begin{equation}\label{commuteLaplace1}
 \overline{\left(\bar{\Delta} \Psi^q_{jl}\right)} = \Delta \Psi^q_{jl}+ {R_j}^r \Psi^q_{rl} + {R_l}^r \Psi^q_{jr} - {R_r}^q \Psi^r_{jl}.
\end{equation}
Substituting \eqref{commuteLaplace1} into \eqref{LaplaceS} and combining with \eqref{timeS}, we obtain
\begin{equation}\label{evoS}
\left(\frac{\partial}{\partial t} - \Delta \right) S = S - \vert \nabla \Psi \vert^2 - \vert \bar{\nabla} \Psi \vert^2 - 2 \operatorname{Re} \left( g^{\bar{j}i} g^{\bar{l}k} g_{p\bar{q}}  \nabla^{\bar{s}} {R(\tilde{g}_0)_{i\bar{s}k}}^p \overline{\Psi^q_{jl}}\right)
\end{equation}
Now we need to control the final term in \eqref{evoS} to complete the proof. By choosing normal coordinates for $\tilde{g}_0$,
\begin{eqnarray}\label{eqn90}
 2 \operatorname{Re} \left( g^{\bar{j}i} g^{\bar{l}k} g_{p\bar{q}}  \nabla^{\bar{s}} {R(\tilde{g}_0)_{i\bar{s}k}}^p \overline{\Psi^q_{jl}}\right) & = & 2 \operatorname{Re} \Big( g^{\bar{j}i} g^{\bar{l}k} g_{p \bar{q}} g^{\bar{s}r} \Big(\tilde{\nabla}_r {R(\tilde{g}_0)_{i\bar{s}k}}^p - \Psi^a_{ir}  {R(\tilde{g}_0)_{a\bar{s}k}}^p  \nonumber \\
& & \ \ -\Psi^a_{kr}  {R(\tilde{g}_0)_{i\bar{s}a}}^p + \Psi^p_{ar}  {R(\tilde{g}_0)_{i\bar{s}k}}^a \Big) \overline{\Psi^q_{jl}} \Big).
\end{eqnarray}
We bound the first term in \eqref{eqn90} using Lemma \ref{lemmaTildeCurv}:
\begin{equation}\label{eqn91}
 \left\vert 2 \operatorname{Re} \left(g^{\bar{j}i} g^{\bar{l}k} g_{p \bar{q}} g^{\bar{s}r} \tilde{\nabla}_r {R(\tilde{g}_0)_{i\bar{s}k}}^p \overline{\Psi_{jl}^q} \right) \right\vert\leq C \vert \tilde{\nabla} \Rm(\tilde{g}_0) \vert^2 + CS \leq C + CS.
\end{equation}
Similarly for the remaining terms in \eqref{eqn90},
\begin{equation}\label{eqn92}
 \left\vert 2 \operatorname{Re} \Big( g^{\bar{j}i} g^{\bar{l}k} g_{p \bar{q}} g^{\bar{s}r}  {R(\tilde{g}_0)_{a\bar{s}k}}^p \Psi^a_{ir}\overline{\Psi^q_{jl}} \Big) \right\vert \leq C \vert \Rm(\tilde{g}_0) \vert^2 S \leq CS.
\end{equation} 
Using \eqref{eqn90}, \eqref{eqn91} and \eqref{eqn92}, we obtain the estimate
\begin{equation}\label{eqn93}
\left\vert 2 \operatorname{Re} \left( g^{\bar{j}i} g^{\bar{l}k} g_{p\bar{q}}  \nabla^{\bar{s}} {R(\tilde{g}_0)_{i\bar{s}k}}^p \overline{\Psi^q_{jl}}\right) \right\vert \leq C'+ CS.
\end{equation}
We combine \eqref{eqn93} with \eqref{evoS} to obtain
\begin{equation}\label{evoS2}
\left(\frac{\partial}{\partial t} - \Delta \right) S \leq  C'+ CS - \vert \nabla \Psi \vert^2 - \vert \bar{\nabla} \Psi \vert^2.
\end{equation}

Define the quantity $Q_4 = S + A \tr_{\tilde{\omega}_t} \omega$ where $A$ is a large constant to be determined later. The evolution equation of $\tr_{\tilde{\omega}_t} \omega$ is (see \cite{SW3}),
\begin{eqnarray}\label{eqn7}
\left( \frac{\partial}{\partial t} - \Delta \right)  \tr_{\tilde{\omega}_t} \omega & = & -\tr_{\tilde{\omega}_t} \omega -  g^{ \bar{l}k}{ R(\tilde{g}_t)_{k\bar{l}}}^{\bar{j}i}g_{i\bar{j}} -  g^{\bar{l}k} \tilde{g}_t^{\bar{j}i} g^{\bar{q}p} \tilde{\nabla}_i g_{k\bar{q}} \tilde{\nabla}_{\bar{j}} g_{p \bar{l}} \nonumber \\
& \leq& - g^{ \bar{l}k}{ R(\tilde{g}_t)_{k\bar{l}}}^{\bar{j}i}g_{i\bar{j}} -  g^{\bar{l}k} \tilde{g}_t^{\bar{j}i} g^{\bar{q}p} \tilde{\nabla}_i g_{k\bar{q}} \tilde{\nabla}_{\bar{j}} g_{p \bar{l}}.
\end{eqnarray}

Using \eqref{eqn7} and \eqref{evoS2} we have
\begin{equation}\label{evoQ4}
\left(\frac{\partial}{\partial t} - \Delta \right) Q_4 \leq C' + CS - \vert \nabla \Psi \vert^2 - \vert \bar{\nabla} \Psi \vert^2 - A g^{ \bar{l}k}{ R(\tilde{g}_t)_{k\bar{l}}}^{\bar{j}i}g_{i\bar{j}} - A g^{\bar{l}k} \tilde{g}_t^{\bar{j}i} g^{\bar{q}p} \tilde{\nabla}_i g_{k\bar{q}} \tilde{\nabla}_{\bar{j}} g_{p \bar{l}}.
\end{equation}
To handle the fourth term in \eqref{evoQ4}, we again work in product normal coordinates for $g_M$ and $g_E$. Using the same argument to control the curvature as in Lemma \ref{lemmaUnifEquiv} and the fact that $g$ and $\tilde{g}_t$ are uniformly equivalent,
\begin{equation}\label{eqn10}
 \left\vert g^{ \bar{l}k}{ R(\tilde{g}_t)_{k\bar{l}}}^{\bar{j}i}g_{i\bar{j}} \right\vert \leq C'' (\tr_{\omega} \tilde{\omega}_t ) ( \tr_{\tilde{\omega}_t} \omega) \leq C''.
\end{equation}
We combine \eqref{evoQ4}, \eqref{eqn10} and again use the uniform equivalence of $g$ and $\tilde{g}_t$, giving
\begin{eqnarray}\label{evoQ42}
\left(\frac{\partial}{\partial t} - \Delta \right) Q_4 & \leq & C' + C S - \vert \nabla \Psi \vert^2 - \vert \bar{\nabla} \Psi \vert^2 + AC'' - \frac{A}{C'''} S \nonumber \\
& \leq & - S - \vert \bar{\nabla} \Psi \vert^2 + C
\end{eqnarray}
where on the last line we choose $A$ large enough so that $C - A/C''' \leq -1$ and throw away the term $\vert \nabla \Psi \vert^2$. Also ignoring the term $\vert \bar{\nabla} \Psi \vert^2$ gives an upper bound for $Q_4$ by the maximum principle. Using Lemma \ref{lemmaUnifEquiv} then shows that $S$ is bounded above as well. Since $g \leq C\tilde{g}_0$ we also have an upper bound for $\vert \tilde{\nabla} g \vert^2_{\tilde{g}_0}$.

Now we derive \eqref{ineqS}. Notice that by definition $\vert \bar{\nabla} \Psi \vert^2 = \vert \Rm (g) - \Rm (\tilde{g}_0) \vert^2$ where we use $\Rm(\tilde{g}_0)$ for the Riemann curvature tensor of $\tilde{g}_0$, ${R(\tilde{g}_0)_{i\bar{j}k}}^l$. By Lemma \ref{lemmaTildeCurv},
\begin{equation}\label{eqn11}
\vert \Rm (g) \vert^2  \leq  2 \vert \Rm (g) - \Rm (\tilde{g}_0) \vert^2 +  2 \vert \Rm (\tilde{g}_0) \vert^2 \leq 2 \vert \bar{\nabla} \Psi \vert^2 + C.
\end{equation}
Substituting \eqref{eqn11} into \eqref{evoQ42} along with the bound on $S$ gives \eqref{ineqS}.

\end{proof}

Following \cite{SW3}, we bound the curvature tensor of $g$.

\begin{lemma}
There exists a uniform $C > 0$ such that on $X \times [0, \infty)$,
\begin{equation}
\vert \Rm(g) \vert^2 \leq C.
\end{equation}
\end{lemma}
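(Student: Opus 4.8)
The plan is to apply the maximum principle to the quantity $Q = (\Lambda + S)\,|\Rm(g)|^2$, where $S = |\tilde{\nabla} g|^2$ is the quantity bounded in Lemma~\ref{lemmaGradEst} and $\Lambda$ is a large constant chosen at the end; since $S$ is uniformly bounded, $\Lambda$ can be taken so that $\Lambda \le \Lambda + S \le 2\Lambda$ on $X\times[0,\infty)$. Two evolution inequalities feed into this. First, the curvature norm of $g$ satisfies the standard inequality along the normalized flow \eqref{nkrf},
\[
\Bigl(\tfrac{\partial}{\partial t} - \Delta\Bigr)|\Rm(g)|^2 \;\le\; c_1|\Rm(g)|^3 + c_1|\Rm(g)|^2 ,
\]
obtained from the usual K\"ahler--Ricci flow computation (the extra $-\omega$ term in \eqref{nkrf} only contributes the harmless quadratic term, and we discard the nonpositive $-|\nabla\Rm(g)|^2 - |\bar{\nabla}\Rm(g)|^2$). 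Second, from the proof of Lemma~\ref{lemmaGradEst}, namely \eqref{evoS2}, together with the uniform bound on $S$ and the inequality $|\bar{\nabla}\Psi|^2 = |\Rm(g) - \Rm(\tilde{g}_0)|^2 \ge \tfrac12|\Rm(g)|^2 - C$ (which uses Lemma~\ref{lemmaTildeCurv}), we obtain
\[
\Bigl(\tfrac{\partial}{\partial t} - \Delta\Bigr) S \;\le\; -|\nabla\Psi|^2 - \tfrac12|\Rm(g)|^2 + C .
\]
Finally, since $\nabla g\equiv 0$ and $\nabla_{\bar{q}}\Psi^k_{ij} = R(\tilde{g}_0)_{i\bar{q}j}{}^k - R_{i\bar{q}j}{}^k$, one has the pointwise bound $|\nabla S| \le C\bigl(|\nabla\Psi| + |\Rm(g)| + 1\bigr)$, again by Lemma~\ref{lemmaTildeCurv}.

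Now fix $T>0$ and let $(x_0,t_0)$ be a point at which $Q$ attains its maximum on $X\times[0,T]$. If $t_0 = 0$ then $Q\le \sup_X Q(\cdot,0)$, a fixed constant. If $t_0>0$, then at $(x_0,t_0)$ we have $\nabla Q = 0$, hence $\nabla|\Rm(g)|^2 = -\tfrac{|\Rm(g)|^2}{\Lambda+S}\nabla S$, so the gradient cross-term in the product rule for $(\tfrac{\partial}{\partial t}-\Delta)Q$ is exactly $\tfrac{2|\Rm(g)|^2}{\Lambda+S}|\nabla S|^2 \ge 0$. Writing $R = |\Rm(g)|(x_0,t_0)$ and combining the two evolution inequalities with $(\tfrac{\partial}{\partial t}-\Delta)Q\ge 0$ at $(x_0,t_0)$,
\[
0 \;\le\; (\Lambda+S)\bigl(c_1 R^3 + c_1 R^2\bigr) + R^2\bigl(-\tfrac12 R^2 - |\nabla\Psi|^2 + C\bigr) + \tfrac{2R^2}{\Lambda+S}|\nabla S|^2 .
\]
Substituting the bound for $|\nabla S|^2$ and using $\Lambda+S\ge\Lambda$: choosing $\Lambda$ large, the net coefficient of $R^2|\nabla\Psi|^2$ becomes negative (so that term is discarded) and the $R^4$ contribution $\tfrac{C}{\Lambda}R^4$ coming from the cross-term is made $\le \tfrac18 R^4$; with $\Lambda$ now fixed, the terms $2\Lambda c_1 R^3$, $2\Lambda c_1 R^2$ and the remaining $C R^2$ are each absorbed into $\tfrac12 R^4$ by Young's inequality at the cost of a constant. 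This leaves $\tfrac18 R^4 \le C'$, i.e. $|\Rm(g)|^2(x_0,t_0)\le C$, whence $Q(x_0,t_0)\le 2\Lambda C$. In either case $Q\le C$ on $X\times[0,T]$, and since $T$ is arbitrary and $\Lambda+S\ge\Lambda\ge1$, we conclude $|\Rm(g)|^2 \le C$ on $X\times[0,\infty)$.

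The main obstacle is the cubic term $|\Rm(g)|^3$ in the evolution of $|\Rm(g)|^2$: it is precisely the weight $\Lambda+S$ that upgrades the good term $-\tfrac12|\Rm(g)|^2$ in the evolution of $S$ into a $-\tfrac12|\Rm(g)|^4$ in the evolution of $Q$, which then dominates. The secondary difficulty is that $\nabla S$, and hence the cross-term in the product rule, involves $|\nabla\Psi|$, which is not a priori bounded; this forces us to retain the full inequality \eqref{evoS2} with its $-|\nabla\Psi|^2$ term rather than the weaker \eqref{ineqS}, so that the $-R^2|\nabla\Psi|^2$ generated by the product rule can absorb the offending contribution. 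The product structure of $X$ enters essentially through Lemma~\ref{lemmaTildeCurv}: without uniform control of $\Rm(\tilde{g}_0)$ and its derivatives measured in the collapsing metric $g(t)$, neither the comparison $|\bar{\nabla}\Psi|^2 \ge \tfrac12|\Rm(g)|^2 - C$ nor the bound on $|\nabla S|$ would survive.
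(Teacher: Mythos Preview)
Your argument is correct but takes a different route from the paper. The paper uses the \emph{additive} quantity $Q=|\Rm(g)|+(C_0+1)S$ together with the evolution of $|\Rm(g)|$ (not its square), namely
\[
\Bigl(\tfrac{\partial}{\partial t}-\Delta\Bigr)|\Rm(g)|\le \tfrac{C_0}{2}|\Rm(g)|^2-\tfrac12|\Rm(g)|,
\]
so that combining with \eqref{ineqS} gives $\bigl(\tfrac{\partial}{\partial t}-\Delta\bigr)Q\le -\tfrac12|\Rm(g)|^2+C$ directly, with no cross-term to manage. Your approach instead uses the \emph{multiplicative} quantity $(\Lambda+S)|\Rm(g)|^2$, in the style of Shi's local derivative estimates: the price is that the bad term in the curvature evolution is now cubic, the product rule generates a gradient cross-term with the wrong sign, and you must retain the full $-|\nabla\Psi|^2$ from \eqref{evoS2} (rather than just \eqref{ineqS}) to absorb the $|\nabla\Psi|^2$ appearing in $|\nabla S|^2$. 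The paper's choice of working with $|\Rm(g)|$ rather than $|\Rm(g)|^2$ is exactly what collapses the argument to two lines, since the quadratic bad term is then matched by the quadratic good term in \eqref{ineqS} without any balancing of constants or cross-terms. Your multiplicative scheme is more robust in situations where one does not have a clean linear combination available (e.g.\ local estimates without a global bound on $S$), but here the additive route is noticeably shorter.
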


\begin{proof}
We have the following evolution equation for curvature along the K\"ahler-Ricci flow (see \cite{SW3}):
\begin{equation}\label{evolutionCurvature}
\left( \frac{\partial}{\partial t} - \Delta \right) \vert \Rm (g) \vert \leq \frac{C_0}{2} \vert \Rm(g) \vert^2 - \frac{1}{2} \vert \Rm(g) \vert.
\end{equation}
Define the quantity $Q = \vert \Rm (g) \vert + (C_0 + 1) S$. Then using \eqref{ineqS}, \eqref{evolutionCurvature} and the maximum principle, we have the estimate
\begin{equation}
\left( \frac{\partial}{\partial t} - \Delta \right)  Q \leq - \frac{1}{2}\vert \Rm (g) \vert^2 + C,
\end{equation}
obtaining a bound for $\vert \Rm (g) \vert^2$.
\end{proof}

Using Shi's derivative estimates, we obtain bounds for the derivatives of curvature. For a proof of the following lemma, please see \cite{Shi} (or \cite{SW3} Theorem 2.15).

\begin{lemma}\label{lemmaShiDeriv}
There exists uniform $C(k)$ for $k = 0, 1, 2, \ldots$ such that on $X \times [0, \infty)$, 
\begin{equation}
|\nabla^k_{\mathbb{R}} \operatorname{Rm}(g) |^2 \leq C(k),
\end{equation}
where $\nabla_{\mathbb{R}}$ is the covariant derivative with respect to $g$ as a Riemannian metric.
\end{lemma}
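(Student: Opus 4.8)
The plan is to deduce this from Shi's local derivative estimates for the Ricci flow, feeding in the uniform bound $|\Rm(g)|^2 \le C$ proved in the preceding lemma. The two points to check are that the resulting constants come out independent of $t$ and that the presence of the normalizing term $-\omega$ in \eqref{nkrf} causes no trouble.

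I would split $[0,\infty)$ into $[0,1]$ and $[1,\infty)$. On $X \times [0,1]$ the solution of \eqref{nkrf} is smooth and is determined by the smooth initial data, so each $|\nabla^k_{\mathbb{R}}\Rm(g(t))|^2$ is a continuous function on a compact set and is therefore bounded by a constant depending only on the initial data; in the terminology of this paper such a constant is uniform. For $t \ge 1$ I would apply Shi's estimate on the parabolic time interval $[t-1,t]$, on which, by the preceding lemma, $|\Rm(g(s))| \le K$ for one fixed constant $K$. Shi's theorem then yields $|\nabla^k_{\mathbb{R}}\Rm(g(t))|^2 \le C(k)$ with $C(k)$ depending only on $k$, the dimensions $m,n$, and $K$; the essential point is that the time interval $[t-1,t]$ has the fixed length $1$ rather than the full elapsed time, so the bound is independent of $t$. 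Taking the larger of the $[0,1]$- and $[1,\infty)$-bounds gives the uniform $C(k)$ of the statement.

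Concerning the normalization: the extra term $-\omega$ only contributes terms of order zero and one in $\Rm$ to the Bochner-type evolution equations for $|\nabla^k_{\mathbb{R}}\Rm|^2$, which are absorbed in the usual way; alternatively one passes to the unnormalized flow by the standard rescaling and time change. Either way the statement is exactly Shi's, as recorded in \cite{Shi} and in \cite{SW3} Theorem 2.15.

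I do not expect a genuine obstacle here. The one subtlety worth flagging is that $g(t)$ collapses in the torus directions as $t \to \infty$, so one should observe that Shi's estimates are purely local parabolic regularity statements using \emph{only} the curvature bound: no lower bound on injectivity radius or volume, and no uniform equivalence to a fixed background metric, is needed, so the collapsing is irrelevant for this lemma. (If one preferred to avoid quoting Shi, the same bounds follow by iterating the maximum-principle bootstrap already used for $S$ and $|\Rm(g)|$: differentiate the evolution equations repeatedly and control the resulting $\tilde g_0$-curvature terms via Lemma \ref{lemmaTildeCurv}. Shi's estimates simply package this argument.)
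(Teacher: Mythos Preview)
Your proposal is correct and is essentially the same approach as the paper: the paper does not give its own proof but simply cites Shi's derivative estimates (\cite{Shi}, and \cite{SW3} Theorem 2.15), which is exactly what you invoke, with your $[0,1]$/$[t-1,t]$ splitting being the standard way to extract a time-independent constant from those estimates given the uniform curvature bound of the preceding lemma.
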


\section{Higher order estimates for the metric $\omega(t)$}

We will now use the curvature bounds and the maximum principle to obtain higher order estimates for $g$. Examples of higher order estimates using similar quantities and the maximum principle can be found in \cite{Ch, DH, LSY}. 

\begin{lemma}\label{lemmaHigherOrder}
There exists uniform $C(k) > 0$ for $k = 0, 1, 2,\ldots$ such that on $X \times [0, \infty)$,
\begin{equation}
| \tilde{\nabla}^k g |^2 \leq C(k).
\end{equation}
\end{lemma}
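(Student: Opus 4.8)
The plan is to induct on $k$, the cases $k=0$ (trivial: $|g|^2\equiv m+n$, norms being taken with respect to $g(t)$ throughout, as in Lemmas \ref{lemmaTildeCurv} and \ref{lemmaGradEst}) and $k=1$ (Lemma \ref{lemmaGradEst}) being already available. So I would assume $|\tilde\nabla^j g|^2\le C(j)$ for all $j\le k$, with $k\ge 1$, and put $\Psi^l_{ij}=\Gamma^l_{ij}-\tilde\Gamma^l_{ij}=g^{\bar p l}\tilde\nabla_i g_{j\bar p}$ as in Lemma \ref{lemmaGradEst}, so that $\nabla=\tilde\nabla+\Psi$. Using $\tilde\nabla=\nabla-\Psi$ and the inductive hypothesis to absorb every term with fewer than $k$ covariant derivatives of $\Psi$, the lemma reduces to a uniform bound $|\nabla^k_{\mathbb R}\Psi|^2\le C$, where $\nabla_{\mathbb R}$ is the covariant derivative of $g$ as a Riemannian metric.

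Next I would isolate the single new quantity at this order. First, $\nabla_{\bar q}\Psi^k_{ij}={R(\tilde g_0)_{i\bar qj}}^k-{R_{i\bar qj}}^k$ — the identity underlying the derivation of \eqref{evoPsi} — so, by Lemmas \ref{lemmaTildeCurv} and \ref{lemmaShiDeriv} and the inductive hypothesis, $\bar\nabla\Psi$ and all its covariant derivatives are uniformly bounded; hence in $\nabla^k_{\mathbb R}\Psi$ every component carrying an anti-holomorphic derivative can be commuted inward (each commutator costing a bounded curvature factor and a lower-order derivative of $\Psi$, inductively controlled) and is therefore uniformly bounded. So it is enough to bound the purely holomorphic derivative $W:=\nabla_{i_1}\cdots\nabla_{i_k}\Psi$. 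Then I would apply $\nabla_{i_1}\cdots\nabla_{i_k}$ to \eqref{evoPsi} and commute it past $\partial_t$ and past $\Delta$: commuting past $\partial_t$ produces covariant derivatives of $\partial_t\Gamma=-\nabla\Ric(g)$ (bounded, Lemma \ref{lemmaShiDeriv}) against derivatives of $\Psi$ of order $\le k-1$; commuting past $\Delta$ produces covariant derivatives of $\Rm(g)$ (bounded, Lemma \ref{lemmaShiDeriv}) against derivatives of $\Psi$, the order reaching $k$ only against an undifferentiated $\Rm(g)$ — this is the step where it is essential that holomorphic covariant derivatives commute on a K\"ahler manifold, so that no curvature is created among the $\nabla_{i_s}$'s and the order is never pushed past $k$ except through the harmless linear term $\Rm(g)\ast W$; and $\nabla_{i_1}\cdots\nabla_{i_k}$ of the right-hand side of \eqref{evoPsi} is bounded plus a linear term $R(\tilde g_0)\ast W$, using that $R(\tilde g_0)$ is $\tilde\nabla$-covariantly bounded to all orders (Lemma \ref{lemmaTildeCurv}) and that the $\Psi$-corrections are inductively bounded. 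This should give, for every $0\le j\le k$,
\begin{equation}
\left(\frac{\partial}{\partial t}-\Delta\right)|\nabla^j_{\mathbb R}\Psi|^2 \ \le\ -\,|\nabla^{j+1}_{\mathbb R}\Psi|^2 \ +\ C\,|\nabla^j_{\mathbb R}\Psi|^2 \ +\ C(k),
\end{equation}
the case $j=0$ being exactly \eqref{evoS} together with \eqref{eqn93}.

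The bound on $W$ would then follow by the maximum principle as in the proofs of Lemmas \ref{lemmaGradEst} and \ref{lemmaUnifEquiv}, with a two-level telescoping quantity. Since $|\nabla^{k-1}_{\mathbb R}\Psi|^2$ involves only $k$ covariant derivatives of $g$, it is already bounded by the inductive hypothesis, so the displayed inequality at $j=k-1$ reads $\left(\tfrac{\partial}{\partial t}-\Delta\right)|\nabla^{k-1}_{\mathbb R}\Psi|^2\le -|\nabla^k_{\mathbb R}\Psi|^2+C(k)$. Setting $Q=|\nabla^k_{\mathbb R}\Psi|^2+A|\nabla^{k-1}_{\mathbb R}\Psi|^2$ with $A$ large, adding $A$ times this to the displayed inequality at $j=k$, and choosing $A$ large enough to absorb the term $C|\nabla^k_{\mathbb R}\Psi|^2$ leaves $\left(\tfrac{\partial}{\partial t}-\Delta\right)Q\le -|\nabla^k_{\mathbb R}\Psi|^2+C$. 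Since $Q$ is bounded at $t=0$, the maximum principle on $X\times[0,T]$ (uniformly in $T$) forces $|\nabla^k_{\mathbb R}\Psi|^2\le C$. Together with the boundedness of the anti-holomorphic derivatives of $\Psi$ and the inductive hypothesis this yields $|\nabla^k_{\mathbb R}\Psi|^2\le C(k+1)$, hence $|\tilde\nabla^{k+1}g|^2\le C(k+1)$, completing the induction; the bound with $|\cdot|_{\tilde g_0}$ follows since $\tilde\nabla^{k+1}g$ is a covariant tensor and $g\le C\tilde g_0$.

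The hard part is the bookkeeping in the second paragraph: one must check that the \emph{only} term of order $k+1$ in derivatives of $g$ occurring in $\left(\tfrac{\partial}{\partial t}-\Delta\right)|\nabla^k_{\mathbb R}\Psi|^2$ is the good term $-|\nabla^{k+1}_{\mathbb R}\Psi|^2$, everything else being either uniformly bounded — which is exactly where the curvature estimates of Lemmas \ref{lemmaTildeCurv} and \ref{lemmaShiDeriv} and the identity $\bar\nabla\Psi=\Rm(\tilde g_0)-\Rm(g)$ enter — or of order $\le k$ and controlled by induction. Two structural features are used repeatedly and are what make the argument close: holomorphic covariant derivatives commute, and — because $X=M\times E$ with $g_E$ flat — the connection $\tilde\nabla$ is $t$-independent while $\Rm(\tilde g_0)$ and all its $\tilde\nabla$-derivatives remain uniformly bounded with respect to the degenerating metric $g(t)$. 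For the same reason all norms must be measured with respect to $g(t)$ (equivalently $\tilde g_t$), never with respect to $\tilde g_0$, since those two metrics are only one-sidedly comparable.
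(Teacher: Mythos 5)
Your proposal is correct, and it is a genuinely different organization from the paper's. The paper proves the bound by evolving $|\tilde\nabla^k\Psi|^2$, where $\tilde\nabla$ is the fixed connection of $\tilde g_0$. The advantage of that choice is that $\tilde\nabla$ is $t$-independent, so $\partial_t$ commutes through the covariant derivatives and only the Laplacian commutator $[\tilde\nabla^k,\Delta]$ has to be computed; the price is that this commutator is with respect to two different connections, and the paper pays it with a very long explicit expansion of $(\tilde\nabla_r\Delta-\Delta\tilde\nabla_r)\Psi$ in special coordinates. You instead evolve $|\nabla^k\Psi|^2$ with the connection of $g(t)$, which makes the Laplacian commutator clean and only a linear $\Rm(g)\ast\nabla^k\Psi$ term survives at top order; the price is the commutator $[\partial_t,\nabla]$, which produces $\nabla\Ric(g)$ and is controlled by Lemma \ref{lemmaShiDeriv}. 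Your other structural simplification is noticing that $\nabla_{\bar q}\Psi^k_{ij}=R(\tilde g_0)_{i\bar qj}{}^k-R_{i\bar qj}{}^k$, so that once Lemmas \ref{lemmaTildeCurv} and \ref{lemmaShiDeriv} are available every derivative of $\Psi$ containing a $\bar\nabla$ is already under control and the only new quantity at each order is the pure holomorphic $\nabla^k\Psi$. The paper reaches the same place but by brute force. Since you aim directly at $|\nabla^k_{\mathbb R}\Psi|^2$, your argument in fact delivers Lemmas \ref{lemmaBarredDerivs} and \ref{lemmaHigherOrder2} at the same stroke, whereas the paper derives those separately afterwards. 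The telescoping quantity $|\nabla^k_{\mathbb R}\Psi|^2+A|\nabla^{k-1}_{\mathbb R}\Psi|^2$ and the closing of the maximum principle are the same as the paper's $Q_k=|\tilde\nabla^k\Psi|^2+2(C_k+1)|\tilde\nabla^{k-1}\Psi|^2$.

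One small imprecision worth noting: the displayed evolution inequality writes the good term as $-|\nabla^{j+1}_{\mathbb R}\Psi|^2$, but what $-\Delta|T|^2$ actually produces is $-|\nabla T|^2-|\bar\nabla T|^2$ with $T=\nabla^j_{\mathbb R}\Psi$; this differs from $|\nabla^{j+1}_{\mathbb R}\Psi|^2$ by curvature commutators, which are of order $\leq j$ and hence harmless, and in any case the telescoping only needs the piece $-|\nabla\nabla^{k-1}\Psi|^2=-|W|^2$, exactly as the paper uses $|\tilde\nabla^k\Psi|^2\leq 2|\nabla\tilde\nabla^{k-1}\Psi|^2+C$. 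Also, the $|\cdot|_{\tilde g_0}$ remark at the end is correct but not part of what Lemma \ref{lemmaHigherOrder} asserts; the norm throughout is with respect to $g(t)$, as you correctly use in the body of the argument.
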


\begin{proof}

We observe that a uniform bound on $\vert \tilde{\nabla} \Psi \vert^2$ will give a uniform bound on $\vert \tilde{\nabla} \tilde{\nabla} g \vert^2$.  We begin by calculating
\begin{eqnarray}\label{timeGradPsi}
\frac{\partial}{\partial t} \vert \tilde{\nabla} \Psi \vert^2 & = & \frac{\partial}{\partial t} \left( g^{\bar{s}r} g^{\bar{j}i} g^{\bar{l}k} g_{p\bar{q}} \tilde{\nabla}_r \Psi^p_{ik} \overline{\tilde{\nabla}_s \Psi^q_{jl}} \right) \nonumber \\
& = & -\left( -R^{\bar{s}r} - g^{\bar{s}r}\right) g^{\bar{j}i} g^{\bar{l}k} g_{p\bar{q}} \tilde{\nabla}_r \Psi^p_{ik} \overline{\tilde{\nabla}_s \Psi^q_{jl}} - g^{\bar{s}r} \left( -R^{\bar{j}i} - g^{\bar{j}i} \right)  g^{\bar{l}k} g_{p\bar{q}} \tilde{\nabla}_r \Psi^p_{ik} \overline{\tilde{\nabla}_s \Psi^q_{jl}} \nonumber \\ 
& & \ \ -  g^{\bar{s}r} g^{\bar{j}i} \left( -R^{\bar{l}k} - g^{\bar{l}k} \right) g_{p\bar{q}} \tilde{\nabla}_r \Psi^p_{ik} \overline{\tilde{\nabla}_s \Psi^q_{jl}} +  g^{\bar{s}r} g^{\bar{j}i} g^{\bar{l}k} \left( -R_{p\bar{q}} -g_{p\bar{q}} \right) \tilde{\nabla}_r \Psi^p_{ik} \overline{\tilde{\nabla}_s \Psi^q_{jl}} \nonumber \\
& & \ \ + 2 \operatorname{Re} \left( g^{\bar{s}r} g^{\bar{j}i} g^{\bar{l}k} g_{p\bar{q}} \tilde{\nabla}_r \left( \Delta \Psi^p_{ik} - {\nabla}^{\bar{b}} {R(\tilde{g}_0)_{i\bar{b}k}}^p \right)  \overline{\tilde{\nabla}_s \Psi^q_{jl}}\right).
\end{eqnarray}
Applying the Laplacian to  $\vert \tilde{\nabla} \Psi \vert^2$,
\begin{eqnarray}\label{laplaceGradPsi}
\Delta  \vert \tilde{\nabla} \Psi \vert^2 & = & \vert \nabla \tilde{\nabla} \Psi \vert^2 + \vert \bar{\nabla} \tilde{\nabla} \Psi \vert^2 +  g^{\bar{s}r} g^{\bar{j}i} g^{\bar{l}k} g_{p\bar{q}} \left( \left(\Delta \tilde{\nabla}_r \Psi^p_{ik} \right) \overline{\tilde{\nabla}_s \Psi^q_{jl}} + \tilde{\nabla}_r \Psi^p_{ik} \overline{\left(\bar{\Delta} \tilde{\nabla}_s \Psi^q_{jl}\right)}\right) \nonumber \\
& = & \vert \nabla \tilde{\nabla} \Psi \vert^2 + \vert \bar{\nabla} \tilde{\nabla} \Psi \vert^2 +  2 \operatorname{Re} \left( g^{\bar{s}r} g^{\bar{j}i} g^{\bar{l}k} g_{p\bar{q}} \left(\Delta \tilde{\nabla}_r \Psi^p_{ik} \right) \overline{\tilde{\nabla}_s \Psi^q_{jl}} \right) \nonumber \\
& & \ \ + R^{\bar{s}r} g^{\bar{j}i} g^{\bar{l}k} g_{p\bar{q}} \tilde{\nabla}_r \Psi^p_{ik} \overline{\tilde{\nabla}_s \Psi^q_{jl}} + g^{\bar{s}r} R^{\bar{j}i}  g^{\bar{l}k} g_{p\bar{q}} \tilde{\nabla}_r \Psi^p_{ik} \overline{\tilde{\nabla}_s \Psi^q_{jl}} \nonumber \\
& & \ \ + g^{\bar{s}r} g^{\bar{j}i} R^{\bar{l}k}  g_{p\bar{q}} \tilde{\nabla}_r \Psi^p_{ik} \overline{\tilde{\nabla}_s \Psi^q_{jl}} -  g^{\bar{s}r} g^{\bar{j}i} g^{\bar{l}k} R_{p\bar{q}} \tilde{\nabla}_r \Psi^p_{ik} \overline{\tilde{\nabla}_s \Psi^q_{jl}},
\end{eqnarray}
where on the last line we use a commutation formula similar to \eqref{commuteLaplace1}. Putting together \eqref{timeGradPsi} and \eqref{laplaceGradPsi}, we obtain the evolution equation
\begin{eqnarray}\label{evoGradPsi}
\left(\frac{\partial}{\partial t} - \Delta \right) \vert \tilde{\nabla}\Psi \vert^2 & = & 2 \vert \tilde{\nabla}\Psi \vert^2 -  \vert \nabla \tilde{\nabla} \Psi \vert^2 - \vert \bar{\nabla} \tilde{\nabla} \Psi \vert^2 - 2 \operatorname{Re} \left( g^{\bar{s}r} g^{\bar{j}i} g^{\bar{l}k} g_{p\bar{q}} \tilde{\nabla}_r {\nabla}^{\bar{b}} {R(\tilde{g}_0)_{i\bar{b}k}}^p \overline{\tilde{\nabla}_s \Psi^q_{jl}}\right) \nonumber \\
& & \ \ + 2 \operatorname{Re} \left( g^{\bar{s}r} g^{\bar{j}i} g^{\bar{l}k} g_{p\bar{q}} \left( \tilde{\nabla}_r \Delta - \Delta \tilde{\nabla}_r \right) \Psi_{ik}^p \overline{ \tilde{\nabla}_s \Psi_{jl}^q} \right).
\end{eqnarray}
Choose coordinates so that $\tilde{g}_0$ is the identity and $\partial_{i} \tilde{g}_0 = 0$ and $\partial_{i_1} \partial_{ i_2} \tilde{g}_0 = 0$ at a point as in \cite{T}. To deal with the fourth term in \eqref{evoGradPsi}, we calculate
\begin{eqnarray}\label{fourthTermPart}
\tilde{\nabla}_r \nabla^{\bar{b}} {R(\tilde{g}_0)_{i\bar{b}k}}^p & = & \tilde{\nabla}_r g^{\bar{b}a} \Big( \tilde{\nabla}_a {R(\tilde{g}_0)_{i\bar{b}k}}^p - \Psi_{ia}^\alpha {R(\tilde{g}_0)_{\alpha \bar{b} k}}^p - \Psi_{ka}^\alpha {R(\tilde{g}_0)_{i \bar{b} \alpha}}^p \nonumber \\
& & \ + \Psi_{\alpha a}^p {R(\tilde{g}_0)_{i \bar{b} k}}^\alpha \Big) + g^{\bar{b}a} \Big( \tilde{\nabla}_r \tilde{\nabla}_a {R(\tilde{g}_0)_{i\bar{b}k}}^p - \tilde{\nabla}_r \Psi_{ia}^\alpha {R(\tilde{g}_0)_{\alpha \bar{b} k}}^p \nonumber \\
& & \ - \Psi_{ia}^\alpha \tilde{\nabla}_r {R(\tilde{g}_0)_{\alpha \bar{b} k}}^p - \tilde{\nabla}_r \Psi_{ka}^\alpha {R(\tilde{g}_0)_{i \bar{b} \alpha}}^p - \Psi_{ka}^\alpha \tilde{\nabla}_r {R(\tilde{g}_0)_{i \bar{b} \alpha}}^p \nonumber \\
& & \  + \tilde{\nabla}_r \Psi_{\alpha a}^p {R(\tilde{g}_0)_{i\bar{b}k}}^\alpha + \Psi_{\alpha a}^p \tilde{\nabla}_r {R(\tilde{g}_0)_{i\bar{b}k}}^\alpha \Big).
\end{eqnarray}
We now bound all of the terms arising from \eqref{fourthTermPart} using Lemmas \ref{lemmaTildeCurv} and \ref{lemmaGradEst}. For the first term in \eqref{fourthTermPart},
\begin{eqnarray}\label{fourthTermP1}
\left \vert 2 \operatorname{Re} \Big( g^{\bar{s}r} g^{\bar{j}i} g^{\bar{l}k} g_{p\bar{q}}  \tilde{\nabla}_r g^{\bar{b}a}  \tilde{\nabla}_a {R(\tilde{g}_0)_{i\bar{b}k}}^p \overline{\tilde{\nabla}_s \Psi^q_{jl}}\Big) \right \vert & \leq & C \vert \tilde{\nabla} g \vert \vert \tilde{\nabla} \Rm(\tilde{g}_0) \vert \vert \tilde{\nabla} \Psi \vert \nonumber \\
& \leq & C \vert \tilde{\nabla} \Psi \vert^2 + C.
\end{eqnarray}
We bound the second, and similarly the third and fourth terms in \eqref{fourthTermPart}:
\begin{eqnarray}\label{fourthTermP2}
\left \vert 2 \operatorname{Re} \Big( g^{\bar{s}r} g^{\bar{j}i} g^{\bar{l}k} g_{p\bar{q}}  \tilde{\nabla}_r g^{\bar{b}a} \Psi_{ia}^\alpha {R(\tilde{g}_0)_{\alpha \bar{b} k}}^p \overline{\tilde{\nabla}_s \Psi^q_{jl}}\Big) \right \vert & \leq & C \vert \tilde{\nabla} g \vert \vert \Rm(\tilde{g}_0) \vert \vert \tilde{\nabla} \Psi \vert \nonumber \\
& \leq & C \vert  \tilde{\nabla} \Psi \vert^2 + C
\end{eqnarray}
Calculating similarly for the remaining terms in \eqref{fourthTermPart}, we obtain the following bound for the fourth term of \eqref{evoGradPsi}: 
\begin{equation}\label{boundForFourthTerm}
2 \operatorname{Re} \left( g^{\bar{s}r} g^{\bar{j}i} g^{\bar{l}k} g_{p\bar{q}} \tilde{\nabla}_r {\nabla}^{\bar{b}} {R(\tilde{g}_0)_{i\bar{b}k}}^p \overline{\tilde{\nabla}_s \Psi^q_{jl}}\right)  \leq  C \vert \tilde{\nabla} \Psi \vert^2 + C.
\end{equation}
Using the same coordinates as above, we compute the commutation relation for $\left( \tilde{\nabla}_r \Delta - \Delta \tilde{\nabla}_r \right) \Psi_{ik}^p$ to handle the last term in \eqref{evoGradPsi},
\begin{eqnarray}\label{huge1}
\tilde{\nabla}_r \Delta \Psi_{ik}^p & = & \tilde{\nabla}_r \Big( g^{\bar{b}a} \nabla_a \nabla_{\bar{b}} \Psi_{ik}^p \Big) \nonumber \\
& = & \partial_r g^{\bar{b}a} \Big( \partial_a \partial_{\bar{b}} \Psi_{ik}^p - \Gamma_{ia}^\alpha \partial_{\bar{b}} \Psi_{\alpha k}^p - \Gamma_{ka}^\alpha \partial_{\bar{b}} \Psi_{i\alpha}^p + \Gamma_{\alpha a}^p \partial_{\bar{b}} \Psi_{ik}^\alpha \Big) \nonumber \\
& & \ + g^{\bar{b}a} \Big( \partial_r \partial_a \partial_{\bar{b}} \Psi_{ik}^p - \partial_r \Gamma_{ia}^\alpha \partial_{\bar{b}} \Psi_{\alpha k}^p - \Gamma_{ia}^\alpha \partial_r \partial_{\bar{b}} \Psi_{\alpha k}^p - \partial_r \Gamma_{ka}^\alpha \partial_{\bar{b}} \Psi_{i \alpha}^p \nonumber \\
& & \ \ \ - \Gamma_{k a}^\alpha \partial_r \partial_{\bar{b}} \Psi_{i \alpha}^p + \partial_r \Gamma_{\alpha a}^p \partial_{\bar{b}} \Psi_{ik}^\alpha + \Gamma_{\alpha a}^p \partial_r \partial_{\bar{b}} \Psi_{ik}^\alpha \Big).
\end{eqnarray}

\begin{eqnarray}\label{huge2}
\Delta \tilde{\nabla}_r \Psi_{ik}^p & = & g^{\bar{b}a} \nabla_a \nabla_{\bar{b}} \tilde{\nabla}_r \Psi_{ik}^p \nonumber \\
& = & g^{\bar{b}a} \Big( \partial_r \partial_a \partial_{\bar{b}} \Psi_{ik}^p - \Gamma_{ra}^\beta \partial_\beta \partial_{\bar{b}} \Psi_{ik}^p - \Gamma_{ia}^\beta \partial_r \partial_{\bar{b}} \Psi_{\beta k}^p - \Gamma_{ka}^\beta \partial_r \partial_{\bar{b}} \Psi_{i\beta}^p + \Gamma_{\beta a}^p \partial_r \partial_{\bar{b}} \Psi_{ik}^\beta\nonumber \\
& & \  - \partial_a {R(\tilde{g}_0)_{i\bar{b}r}}^\alpha \Psi_{\alpha k}^p - {R(\tilde{g}_0)_{i\bar{b}r}}^\alpha \partial_a \Psi_{\alpha k}^p + \Gamma_{i a}^\beta {R(\tilde{g}_0)_{\beta \bar{b} r}}^\alpha \Psi_{\alpha k}^p \nonumber \\
& & \ + \Gamma_{ra}^\beta {R(\tilde{g}_0)_{i \bar{b} \beta}}^\alpha \Psi_{\alpha k}^p - \Gamma_{\beta a}^\alpha {R(\tilde{g}_0)_{i \bar{b} r}}^\beta \Psi_{\alpha k}^p + \Gamma_{\alpha a}^\beta {R(\tilde{g}_0)_{i \bar{b} r}}^\alpha \Psi_{\beta k}^p \nonumber \\
& & \ + \Gamma_{ka}^\beta {R(\tilde{g}_0)_{i \bar{b}r}}^\alpha \Psi_{\alpha \beta}^p - \Gamma_{\beta a}^p {R(\tilde{g}_0)_{i \bar{b} r}}^\alpha \Psi_{\alpha k}^\beta - \partial_a {R(\tilde{g}_0)_{k \bar{b} r}}^\alpha \Psi_{i \alpha}^p \nonumber \\
& & \ - {R(\tilde{g}_0)_{k \bar{b} r}}^\alpha \partial_a \Psi_{i \alpha}^p + \Gamma_{ka}^\beta {R(\tilde{g}_0)_{\beta \bar{b} r}}^\alpha \Psi_{i\alpha}^p + \Gamma_{ra}^\beta {R(\tilde{g}_0)_{k \bar{b} \beta}}^\alpha \Psi_{i\alpha}^p \nonumber \\
& & - \Gamma_{\beta a}^\alpha {R(\tilde{g}_0)_{k \bar{b} r}}^\beta \Psi_{i \alpha}^p + \Gamma_{ia}^\beta {R(\tilde{g}_0)_{k \bar{b} r}}^\alpha \Psi_{\beta \alpha}^p + \Gamma_{\alpha a}^\beta {R(\tilde{g}_0)_{k \bar{b}r}}^p \Psi_{i \beta}^p \nonumber \\
& & - \Gamma_{\beta a}^p {R(\tilde{g}_0)_{k \bar{b} r}}^\alpha \Psi_{i \alpha}^\beta + \partial_a {R(\tilde{g}_0)_{\alpha \bar{b} r}}^p \Psi_{ik}^\alpha + {R(\tilde{g}_0)_{\alpha \bar{b} r}}^p \partial_a \Psi_{ik}^\alpha \nonumber \\
& & - \Gamma_{\alpha a}^\beta {R(\tilde{g}_0)_{\beta \bar{b} r}}^p \Psi_{ik}^\alpha - \Gamma_{ra}^\beta {R(\tilde{g}_0)_{\alpha \bar{b} \beta}}^p \Psi_{ik}^\alpha + \Gamma_{\beta a}^p {R(\tilde{g}_0)_{\alpha \bar{b} r}}^\beta \Psi_{ik}^p \nonumber \\
& & - \Gamma_{ia}^\beta {R(\tilde{g}_0)_{\alpha \bar{b} r}}^p \Psi_{\beta k}^\alpha - \Gamma_{ka}^\beta {R(\tilde{g}_0)_{\alpha \bar{b} r}}^p \Psi_{i\beta}^\alpha + \Gamma_{\beta a}^\alpha {R(\tilde{g}_0)_{\alpha \bar{b} r}}^p \Psi_{ik}^\beta \Big).
\end{eqnarray}
Putting these together and making use of our choice of coordinates,
\begin{eqnarray}\label{huge4}
\Big( \tilde{\nabla}_r \Delta - \Delta \tilde{\nabla}_r \Big) \Psi_{ik}^p & = & \tilde{\nabla}_r g^{\bar{b}a} \Big( \tilde{\nabla}_a ( {R_{i\bar{b}k}}^p - {R(\tilde{g}_0)_{i \bar{b}k}}^p ) - \Psi_{ia}^\alpha ({R_{\alpha \bar{b} k}}^p - {R(\tilde{g}_0)_{\alpha \bar{b} k}}^p ) \nonumber \\
& & \ - \Psi_{ka}^\alpha ( {R_{i \bar{b} \alpha}}^p - {R(\tilde{g}_0)_{i \bar{b} \alpha}}^p ) + \Psi_{\alpha a}^p ( {R_{i \bar{b} k}}^\alpha - {R(\tilde{g}_0)_{i \bar{b} k}}^\alpha ) \Big) \nonumber \\
& & + g^{\bar{b}a} \Big( -\tilde{\nabla}_r \Psi_{ia}^\alpha  ({R_{\alpha \bar{b} k}}^p - {R(\tilde{g}_0)_{\alpha \bar{b} k}}^p ) - \tilde{\nabla}_r \Psi_{ka}^\alpha ( {R_{i \bar{b} \alpha}}^p - {R(\tilde{g}_0)_{i \bar{b} \alpha}}^p )  \nonumber \\
& & \ + \tilde{\nabla}_r \Psi_{\alpha a}^p ( {R_{i \bar{b} k}}^\alpha - {R(\tilde{g}_0)_{i \bar{b} k}}^\alpha ) + \Psi_{ra}^\beta \tilde{\nabla}_\beta ( {R_{i \bar{b} k}}^p - {R(\tilde{g}_0)_{i\bar{b}k}}^p ) \nonumber \\
& & \  + \tilde{\nabla}_a {R(\tilde{g}_0)_{i\bar{b}r}}^\alpha \Psi_{\alpha k}^p + {R(\tilde{g}_0)_{i\bar{b}r}}^\alpha \tilde{\nabla}_a \Psi_{\alpha k}^p - \Psi_{i a}^\beta {R(\tilde{g}_0)_{\beta \bar{b} r}}^\alpha \Psi_{\alpha k}^p \nonumber \\
& & \ - \Psi_{ra}^\beta {R(\tilde{g}_0)_{i \bar{b} \beta}}^\alpha \Psi_{\alpha k}^p + \Psi_{\beta a}^\alpha {R(\tilde{g}_0)_{i \bar{b} r}}^\beta \Psi_{\alpha k}^p - \Psi_{\alpha a}^\beta {R(\tilde{g}_0)_{i \bar{b} r}}^\alpha \Psi_{\beta k}^p \nonumber \\
& & \ - \Psi_{ka}^\beta {R(\tilde{g}_0)_{i \bar{b}r}}^\alpha \Psi_{\alpha \beta}^p + \Psi_{\beta a}^p {R(\tilde{g}_0)_{i \bar{b} r}}^\alpha \Psi_{\alpha k}^\beta + \tilde{\nabla}_a {R(\tilde{g}_0)_{k \bar{b} r}}^\alpha \Psi_{i \alpha}^p \nonumber \\
& & \ + {R(\tilde{g}_0)_{k \bar{b} r}}^\alpha \tilde{\nabla}_a \Psi_{i \alpha}^p - \Psi_{ka}^\beta {R(\tilde{g}_0)_{\beta \bar{b} r}}^\alpha \Psi_{i\alpha}^p - \Psi_{ra}^\beta {R(\tilde{g}_0)_{k \bar{b} \beta}}^\alpha \Psi_{i\alpha}^p \nonumber \\
& & + \Psi_{\beta a}^\alpha {R(\tilde{g}_0)_{k \bar{b} r}}^\beta \Psi_{i \alpha}^p - \Psi_{ia}^\beta {R(\tilde{g}_0)_{k \bar{b} r}}^\alpha \Psi_{\beta \alpha}^p - \Psi_{\alpha a}^\beta {R(\tilde{g}_0)_{k \bar{b}r}}^p \Psi_{i \beta}^p \nonumber \\
& & + \Psi_{\beta a}^p {R(\tilde{g}_0)_{k \bar{b} r}}^\alpha \Psi_{i \alpha}^\beta - \tilde{\nabla}_a {R(\tilde{g}_0)_{\alpha \bar{b} r}}^p \Psi_{ik}^\alpha - {R(\tilde{g}_0)_{\alpha \bar{b} r}}^p \tilde{\nabla}_a \Psi_{ik}^\alpha \nonumber \\
& & + \Psi_{\alpha a}^\beta {R(\tilde{g}_0)_{\beta \bar{b} r}}^p \Psi_{ik}^\alpha + \Psi_{ra}^\beta {R(\tilde{g}_0)_{\alpha \bar{b} \beta}}^p \Psi_{ik}^\alpha - \Psi_{\beta a}^p {R(\tilde{g}_0)_{\alpha \bar{b} r}}^\beta \Psi_{ik}^p \nonumber \\
& & + \Psi_{ia}^\beta {R(\tilde{g}_0)_{\alpha \bar{b} r}}^p \Psi_{\beta k}^\alpha + \Psi_{ka}^\beta {R(\tilde{g}_0)_{\alpha \bar{b} r}}^p \Psi_{i\beta}^\alpha - \Psi_{\beta a}^\alpha {R(\tilde{g}_0)_{\alpha \bar{b} r}}^p \Psi_{ik}^\beta \Big).
\end{eqnarray}
Using \eqref{huge4} and Lemmas \ref{lemmaTildeCurv}, \ref{lemmaGradEst} and \ref{lemmaShiDeriv}, we can bound all the terms resulting from the final term of \eqref{evoGradPsi}. Starting with the first term from \eqref{huge4}:
\begin{equation}\label{finalTermP1}
2 \operatorname{Re} \left( g^{\bar{s}r} g^{\bar{j}i} g^{\bar{l}k} g_{p\bar{q}} \tilde{\nabla}_r g^{\bar{b}a} \tilde{\nabla}_a {R_{i\bar{b}k}}^p       \overline{ \tilde{\nabla}_s \Psi_{jl}^q} \right) \leq C \vert \tilde{\nabla} g \vert \vert \tilde{\nabla} \Rm(g) \vert \vert \tilde{\nabla} \Psi \vert.
\end{equation}
We bound $\vert \tilde{\nabla} \Rm(g) \vert$ by observing that
\begin{equation}
\left( \tilde{\nabla}_a - \nabla_a \right) {R_{i\bar{l}p}}^r = \Psi_{ia}^\alpha {R_{\alpha \bar{l} p}}^r + \Psi_{pa}^\alpha {R_{i \bar{l} \alpha}}^r - \Psi_{\alpha a}^r {R_{i \bar{l} p}}^\alpha,
\end{equation}
and so
\begin{equation}\label{finalTermP2}
\vert \tilde{\nabla} \Rm(g) \vert^2 \leq 2 \vert (\tilde{\nabla} - \nabla) \Rm(g) \vert^2 + 2 \vert \nabla \Rm(g) \vert^2 \leq C \vert \Psi \vert^2 + C\vert \Rm(g) \vert^2 + 2 \vert \nabla \Rm(g) \vert^2 \leq C
\end{equation}
where to get the last inequality we use Lemmas \ref{lemmaGradEst} and \ref{lemmaShiDeriv}. Substituting \eqref{finalTermP2} into \eqref{finalTermP1} gives the bound
\begin{equation}\label{finalTermP3}
2 \operatorname{Re} \left( g^{\bar{s}r} g^{\bar{j}i} g^{\bar{l}k} g_{p\bar{q}} \tilde{\nabla}_r g^{\bar{b}a} \tilde{\nabla}_a {R_{i\bar{b}k}}^p       \overline{ \tilde{\nabla}_s \Psi_{jl}^q} \right) \leq C \vert \tilde{\nabla} \Psi \vert \leq C \vert \tilde{\nabla} \Psi \vert^2 + C
\end{equation}
For the second term from \eqref{huge4}, using Lemmas \ref{lemmaTildeCurv} and \ref{lemmaGradEst}, 
\begin{eqnarray}
2 \operatorname{Re} \left( g^{\bar{s}r} g^{\bar{j}i} g^{\bar{l}k} g_{p\bar{q}} \tilde{\nabla}_r g^{\bar{b}a} {R(\tilde{g}_0)_{i\bar{b}k}}^p \overline{ \tilde{\nabla}_s \Psi_{jl}^q} \right) & \leq & C \vert \tilde{\nabla} g \vert \vert \tilde{\nabla} \Rm(\tilde{g}_0) \vert  \vert \tilde{\nabla} \Psi \vert \nonumber \\
& \leq & C \vert \tilde{\nabla} \Psi \vert^2 + C.
\end{eqnarray}
Similarly, we bound the remaining terms arising from \eqref{huge4} and obtain the estimate
\begin{equation}\label{huge3}
\vert 2 \operatorname{Re} \Big( g^{\bar{s}r} g^{\bar{j}i} g^{\bar{l}k} g_{p\bar{q}} \Big( \tilde{\nabla}_r \Delta - \Delta \tilde{\nabla}_r \Big) \Psi_{ik}^p \overline{ \tilde{\nabla}_s \Psi_{jl}^q} \Big)\vert  \leq C \vert \tilde{\nabla} \Psi \vert^2 + C.
\end{equation}
Substituting \eqref{boundForFourthTerm} and \eqref{huge3} into \eqref{evoGradPsi},
\begin{equation}\label{evoGradPsi2}
\left( \frac{\partial}{\partial t} - \Delta \right)  \vert \tilde{\nabla}\Psi \vert^2 \leq C_2 \vert \tilde{\nabla}\Psi \vert^2 + C.
\end{equation}
By the definition of $\Psi$,
\begin{equation}
\nabla_l \Psi_{ij}^k - \tilde{\nabla}_l \Psi_{ij}^k = - \Psi_{li}^\alpha \Psi_{\alpha j}^k - \Psi_{lj}^\alpha \Psi_{i \alpha}^k + \Psi_{l \alpha}^k \Psi_{ij}^\alpha.
\end{equation}
Using this with the Lemma \ref{lemmaGradEst}, we have
\begin{equation}\label{equivPsi}
\vert \tilde{\nabla} \Psi \vert^2 \leq 2 \vert \nabla \Psi \vert^2 +  2 \vert \tilde{\nabla} \Psi - \nabla \Psi \vert^2 \leq 2 \vert \nabla \Psi \vert^2 + C .
\end{equation}
Define the quantity $Q_1 = \vert \tilde{\nabla} \Psi \vert^2 + 2(C_1 + 1) \vert \Psi \vert^2$. Then using \eqref{evoS2}, \eqref{evoGradPsi2}, \eqref{equivPsi} and Lemma \ref{lemmaGradEst},
\begin{eqnarray}
\left( \frac{\partial}{\partial t} - \Delta \right) Q_1 & \leq &  C_1 \vert \tilde{\nabla} \Psi \vert^2 + C + 2(C_1 + 1) \left( C + C\vert \Psi \vert^2 - \vert \nabla \Psi \vert^2 - \vert \bar{\nabla} \Psi \vert^2 \right) \nonumber \\
& \leq & - \vert \tilde{\nabla} \Psi \vert^2 + C.
\end{eqnarray}
This gives a uniform bound for  $\vert \tilde{\nabla} \Psi \vert^2$ and hence a uniform bound for  $\vert \tilde{\nabla} g \vert^2$.

Now we may proceed inductively to derive estimates of any order. As in the case when $k=1$, it will suffice to bound $\vert \tilde{\nabla}^k \Psi \vert^2$ by induction. Computing as in \eqref{evoGradPsi}, the evolution equation of $\vert \tilde{\nabla}^k \Psi \vert^2$ is
\begin{eqnarray}\label{evoGradKPsi}
\left(\frac{\partial}{\partial t} - \Delta \right) \vert \tilde{\nabla}^k \Psi \vert^2 & = & (k+1) \vert \tilde{\nabla}^k\Psi \vert^2 -  \vert \nabla \tilde{\nabla}^k \Psi \vert^2 - \vert \bar{\nabla} \tilde{\nabla}^k \Psi \vert^2 - 2 \operatorname{Re}\left< \tilde{\nabla}^k T , \tilde{\nabla}^k \Psi \right> \nonumber \\
& & \ \ + 2 \operatorname{Re} \left< \left(\tilde{\nabla}^k \Delta - \Delta \tilde{\nabla}^k \right) \Psi , \tilde{\nabla}^k \Psi \right>,
\end{eqnarray} 
where $\left< \cdot , \cdot \right>$ denotes the inner product with respect to $g$ and where $T$ is the tensor $T_{ij}^k = \nabla^{\bar{b}} {R_{i\bar{b}j}}^k$. We work in coordinates where $\tilde{g}_0$ is the identity and $\partial_i \tilde{g}_0 = 0, \partial_{i_1} \partial_{i_2} \tilde{g}_0 = 0, \ldots , \partial_{i_1} \partial_{i_2} \ldots \partial_{i_{k+1}}\tilde{g}_0 = 0$ at a point as in \cite{T}. Using these coordinates, $\tilde{\Gamma} = 0$, \ldots, $\tilde{\nabla}^k \tilde{\Gamma} = 0$ and $\Gamma = \Psi, \ldots, \tilde{\nabla}^k \Gamma = \tilde{\nabla}^k \Psi$. Proceeding as we did to obtain \eqref{boundForFourthTerm}, we bound the fourth term in \eqref{evoGradKPsi} by $C \vert \tilde{\nabla}^k \Psi \vert^2 + C$ since all lower order derivatives of $\Psi$ are bounded by induction. As in \eqref{huge4}, the final term is made up of terms involving derivatives of curvature tensors and derivatives of $\Psi$ of order less than or equal to $k$. All terms here are good, since a $k$-th order derivative of $\Psi$ is what we are estimating, and by induction lower order derivatives of $\Psi$ are bounded. Derivatives of order less than or equal to $k$ of $\Rm(g)$ are bounded by induction and Lemma \ref{lemmaShiDeriv} since differentiation with respect to $g$ and $\tilde{g}_0$ differ by terms involving lower order derivatives of $\Psi$ as in \eqref{finalTermP2}. Any derivatives of $\Rm(\tilde{g}_0)$ are bounded by Lemma \ref{lemmaTildeCurv}. As above, we obtain the estimate
\begin{equation}\label{evoGradKPsiV2}
\left(\frac{\partial}{\partial t} - \Delta \right) \vert \tilde{\nabla}^k \Psi \vert^2 \leq C_k \vert \tilde{\nabla}^k \Psi \vert^2 + C.
\end{equation}
We define the quantity $Q_k = \vert \tilde{\nabla}^k \Psi \vert^2 + 2(C_k + 1) \vert \tilde{\nabla}^{k-1} \Psi \vert^2$. We have the inequality
\begin{eqnarray}
\vert \tilde{\nabla}^k \Psi \vert^2 & \leq & 2 \vert \nabla \tilde{\nabla}^{k-1} \Psi \vert^2 + 2 \vert (\nabla - \tilde{\nabla}) \tilde{\nabla}^{k-1} \Psi \vert^2 \nonumber \\
 & \leq & 2 \vert \nabla \tilde{\nabla}^{k-1} \Psi \vert^2 + C
\end{eqnarray}
since $ (\nabla - \tilde{\nabla}) \tilde{\nabla}^{k-1} \Psi$ is made up of terms involving $\Psi$ and $\tilde{\nabla}^{k-1} \Psi$ and hence is bounded by the induction hypothesis. Then using this and \eqref{evoGradKPsiV2}, we have
\begin{eqnarray}
\left( \frac{\partial}{\partial t} - \Delta \right) Q_k & \leq & C_k \vert \tilde{\nabla}^k \Psi \vert^2 + C + 2(C_k+1) \left( C - \vert  \nabla \tilde{\nabla}^{k-1} \Psi \vert^2 \right) \nonumber \\
& \leq & - \vert \tilde{\nabla}^k \Psi \vert^2 + C
\end{eqnarray}
giving us a bound for $\vert \tilde{\nabla}^k \Psi \vert^2$.
\end{proof}

Because of the symmetries of the metric tensor $g_{i \bar{j}}$, we obtain the following lemma bounding the barred derivatives of the metric. 

\begin{lemma}\label{lemmaBarredDerivs}
There exists uniform $C(k) > 0$ for $k = 0, 1, 2,\ldots$ such that on $X \times [0, \infty)$,
\begin{equation}
\vert \bar{\tilde{\nabla}}^k g \vert^2 \leq C(k).
\end{equation}
\end{lemma}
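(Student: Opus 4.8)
The plan is to deduce this from Lemma~\ref{lemmaHigherOrder} by exploiting the Hermitian symmetry $g_{i\bar{j}} = \overline{g_{j\bar{i}}}$ together with the fact that $\tilde{g}_0$ is K\"ahler, so that the connection $\tilde{\nabla}$ preserves the splitting of tensors into $(p,q)$-types and complex conjugation intertwines barred and unbarred covariant differentiation.

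First I would record that, in the product normal coordinates for $g_M$ and $g_E$ fixed above, the only nonvanishing Christoffel symbols of $\tilde{\nabla}$ are $\tilde{\Gamma}^k_{ij}$ and their conjugates $\tilde{\Gamma}^{\bar{k}}_{\bar{i}\bar{j}} = \overline{\tilde{\Gamma}^k_{ij}}$; in particular $\tilde{\nabla}_{\bar{a}}$ acts on an anti-holomorphic index with a $\tilde{\Gamma}^{\bar{\bullet}}_{\bar{a}\,\bar{\bullet}}$ term and as $\partial_{\bar{a}}$ otherwise. Iterating the defining formula for $\tilde{\nabla}^k g$ and conjugating it term by term then yields the componentwise identity
\[
\tilde{\nabla}_{\bar{a}_1}\cdots\tilde{\nabla}_{\bar{a}_k} g_{i\bar{j}} \;=\; \overline{\tilde{\nabla}_{a_1}\cdots\tilde{\nabla}_{a_k}\, g_{j\bar{i}}}.
\]
Since the pointwise norm $|\cdot|$ is taken with respect to the Hermitian metric $g(t)$, it is invariant under complex conjugation composed with the index swap $i\leftrightarrow j$, so $|\bar{\tilde{\nabla}}^k g|^2 = |\tilde{\nabla}^k g|^2 \le C(k)$ by Lemma~\ref{lemmaHigherOrder}.

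If $\bar{\tilde{\nabla}}^k$ is to be understood as allowing mixed strings of barred and unbarred covariant derivatives, I would instead run an induction on the number of ``out of order'' adjacent pairs: commuting an unbarred $\tilde{\nabla}_a$ past a barred $\tilde{\nabla}_{\bar{b}}$ via the K\"ahler commutation formula produces the reordered term plus a curvature term of the shape $R(\tilde{g}_0)_{\bullet\bar{b}\bullet}{}^{\bullet}$ contracted against a strictly lower-order covariant derivative of $g$. The curvature factors and all their $\tilde{\nabla}$-derivatives are uniformly bounded by Lemma~\ref{lemmaTildeCurv}, and the lower-order derivatives of $g$ are bounded by Lemma~\ref{lemmaHigherOrder} (and the inductive hypothesis), so after finitely many commutations one is reduced to an all-barred (or all-unbarred) string of the same order, handled by the conjugation argument above.

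The only real obstacle here is bookkeeping: fixing the correct index placement in the conjugation identity, and, in the mixed case, checking that the commutation induction genuinely terminates. No analytic input beyond Lemmas~\ref{lemmaTildeCurv} and~\ref{lemmaHigherOrder} is needed.
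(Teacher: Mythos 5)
Your first paragraph is exactly the argument the paper has in mind: the conjugation identity $\tilde{\nabla}_{\bar{a}_1}\cdots\tilde{\nabla}_{\bar{a}_k} g_{i\bar{j}} = \overline{\tilde{\nabla}_{a_1}\cdots\tilde{\nabla}_{a_k}\, g_{j\bar{i}}}$ (using the pure Christoffel symbols of the K\"ahler metric $\tilde{g}_0$) together with conjugation-invariance of the $g(t)$-norm reduces the bound to Lemma~\ref{lemmaHigherOrder}, which is precisely what the paper means by ``the symmetries of the metric tensor.'' Your second paragraph on mixed strings is correct but unnecessary here, since the notation $\bar{\tilde{\nabla}}^k$ means an all-barred string and the mixed case is deferred to Lemma~\ref{lemmaHigherOrder2}.
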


Using Lemmas \ref{lemmaHigherOrder} and \ref{lemmaBarredDerivs}, we construct estimates for all possible covariant derivatives of the metric.

\begin{lemma}\label{lemmaHigherOrder2}
There exists uniform $C(k) > 0$  for $k = 0, 1, 2, \ldots$ such that on $X \times [0,\infty)$,
\begin{equation}
\vert \tilde{\nabla}^k_{\mathbb{R}} g \vert^2 \leq C(k),
\end{equation}
where $\tilde{\nabla}_{\mathbb{R}}$ is the covariant derivative with respect to $\tilde{g}_0$ as a Riemannian metric. 
\end{lemma}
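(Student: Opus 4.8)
The plan is to reduce the lemma to uniform bounds on all ``mixed'' complex covariant derivatives of $g$ and of $\Psi$, and then to obtain those by induction on the order, feeding in the previous lemmas through the commutation formulas. Since $\tilde g_0$ is K\"ahler, its Levi-Civita connection $\tilde\nabla$ preserves the splitting $T_{\mathbb{C}}X=T^{1,0}X\oplus T^{0,1}X$, so in holomorphic coordinates $\tilde\nabla^k_{\mathbb{R}}g$ is a finite sum of tensors $D_1\cdots D_k\,g$ with each $D_i$ either $\tilde\nabla$ or $\bar{\tilde\nabla}$, and $|\tilde\nabla^k_{\mathbb{R}}g|^2$ is the sum of the squared $g$-norms of these blocks; hence it suffices to bound each block. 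I would prove, by induction on $k$, the stronger statement that for all $j\le k$ every mixed $\tilde\nabla,\bar{\tilde\nabla}$ derivative of order $j$ of both $g$ and of $\Psi$ (where $\Psi^k_{ij}=g^{\bar{l}k}\tilde\nabla_ig_{j\bar{l}}$) is uniformly bounded in the $g$-norm. For $j\le1$ this follows from $|g|^2_g=m+n$, from Lemma \ref{lemmaGradEst} (which bounds $|\tilde\nabla g|^2$ and $|\Psi|^2=S$, hence by conjugation $|\bar{\tilde\nabla}g|^2$), from $|\tilde\nabla\Psi|^2\le C$ in the proof of Lemma \ref{lemmaHigherOrder}, and from $|\bar{\tilde\nabla}\Psi|^2\le C$ (because $\bar{\tilde\nabla}\Psi$ equals the curvature difference $\Rm(\tilde g_0)-\Rm(g)$ computed in the proof of Lemma \ref{lemmaGradEst}, and both curvatures are bounded by Lemmas \ref{lemmaTildeCurv} and \ref{lemmaShiDeriv}); the purely unbarred block $\tilde\nabla^kg$ and the purely barred block $\bar{\tilde\nabla}^kg$ are precisely Lemmas \ref{lemmaHigherOrder} and \ref{lemmaBarredDerivs}.

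For the inductive step I use two reductions. \emph{Normal-ordering}: each transposition of an adjacent pair $\bar{\tilde\nabla}_{\bar j}\tilde\nabla_i$ produces the commutator $[\tilde\nabla_i,\bar{\tilde\nabla}_{\bar j}]$, which acts on tensors as contraction with the curvature of $\tilde g_0$; so by the Leibniz rule the corrections from moving all $\tilde\nabla$'s to the left are contractions of a mixed derivative of $\Rm(\tilde g_0)$ of order $\le k-2$ with a mixed derivative of $g$ or of $\Psi$ of order $\le k-2$. The former is bounded by Lemma \ref{lemmaTildeCurv}, which bounds $|\tilde\nabla^j_{\mathbb{R}}\Rm(\tilde g_0)|^2$ in the $g$-norm and hence each of its complex blocks; the latter is bounded by the inductive hypothesis; the contractions are estimated by Cauchy--Schwarz in a $g$-orthonormal frame, noting that $g^{-1}$ has bounded $g$-norm and that $g\le C\tilde g_0$ by Lemma \ref{lemmaUnifEquiv}. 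We may thus assume a block is normal-ordered, $\tilde\nabla^p\bar{\tilde\nabla}^q(\cdot)$ with $p+q=k$. \emph{Conjugation}: since $\tilde g_0$ is K\"ahler, $\overline{\tilde\nabla_i\tau}=\bar{\tilde\nabla}_{\bar i}\bar\tau$ for every tensor $\tau$, so $\bar{\tilde\nabla}^qg=\overline{\tilde\nabla^qg}$; expanding $\tilde\nabla^qg$ by repeated use of $\tilde\nabla_ig_{k\bar{l}}=g_{m\bar{l}}\Psi^m_{ik}$ writes it as a constant-coefficient polynomial in $g$ and in $\tilde\nabla$-derivatives of $\Psi$ of order at most $q-1$, and applying $\tilde\nabla^p$ then expresses $\tilde\nabla^p\bar{\tilde\nabla}^qg$ as a polynomial in mixed derivatives of $g$ and of $\Psi,\bar\Psi$ of order $\le k-1$, all controlled by the inductive hypothesis and Lemma \ref{lemmaHigherOrder}. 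Hence the $g$-derivatives of order $k$ are bounded once the $\Psi$-derivatives of order $\le k$ are.

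It remains to bound a mixed derivative $\tilde\nabla^p\bar{\tilde\nabla}^q\Psi$ of order $k$. If $q=0$ this is Lemma \ref{lemmaHigherOrder}, so assume $q\ge1$ and normal-order as above. The key point is that, since $\tilde g_0$ is K\"ahler, $\bar{\tilde\nabla}_{\bar j}$ acting on the holomorphic-type tensor $\Psi$ carries no connection term, so $\bar{\tilde\nabla}_{\bar j}\Psi^m_{kl}=\partial_{\bar j}\Psi^m_{kl}$, which is exactly the curvature difference $\Rm(\tilde g_0)-\Rm(g)$ computed in the proof of Lemma \ref{lemmaGradEst}. Consequently $\tilde\nabla^p\bar{\tilde\nabla}^q\Psi$ equals a mixed derivative of order $\le k-1$ of $\Rm(\tilde g_0)-\Rm(g)$, up to the lower-order commutator corrections already handled. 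The $\Rm(\tilde g_0)$ part is bounded again by Lemma \ref{lemmaTildeCurv}; for the $\Rm(g)$ part one rewrites the $\tilde\nabla,\bar{\tilde\nabla}$ derivatives in terms of $\nabla_g,\bar\nabla_g$ derivatives, which differ by products with mixed derivatives of $\Psi,\bar\Psi$ of strictly smaller order (handled inductively), and then Lemma \ref{lemmaShiDeriv} bounds the Riemannian covariant derivatives of $\Rm(g)$ with respect to $g$. This closes the induction, and the lemma follows.

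I expect the only real difficulty to be the bookkeeping: one must set up the induction so that every commutation and every change of connection strictly lowers the relevant order --- the order of the $g$-derivative, of the $\Psi$-derivative, or of the curvature derivative --- and one must keep careful track of which inverse metric and which connection appears in each term so that all norms remain taken with respect to $g(t)$. No new maximum-principle argument or estimate is required; the entire analytic content is already contained in Lemmas \ref{lemmaGradEst}, \ref{lemmaHigherOrder}, \ref{lemmaBarredDerivs}, \ref{lemmaTildeCurv} and \ref{lemmaShiDeriv}.
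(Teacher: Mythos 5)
Your proposal is correct and follows essentially the same route as the paper: both reduce mixed $\tilde\nabla,\bar{\tilde\nabla}$ derivatives of $g$ to mixed derivatives of $\Psi$ via $\tilde\nabla_i g_{j\bar l}=g_{p\bar l}\Psi^p_{ij}$, reduce $\bar{\tilde\nabla}\Psi$ to the curvature difference $\Rm(g)-\Rm(\tilde g_0)$, commute derivatives at the cost of $\Rm(\tilde g_0)$-contractions of lower order, and close the induction using Lemmas \ref{lemmaTildeCurv}, \ref{lemmaShiDeriv}, \ref{lemmaHigherOrder}, \ref{lemmaBarredDerivs}. The only difference is cosmetic bookkeeping: you normal-order and then conjugate, whereas the paper isolates the last unbarred index (for $g$) and the last barred index (for $\Psi$) and pushes it to the end; both organize the same commutation argument.
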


\begin{proof}
Let $\mathbf{a} = (a_1, a_2, \ldots, a_k)$ be a $k$-tuple with symbolic entries $z$ or $\bar{z}$. We define $\tilde{\nabla}^{a_i}$ to be the operator $\tilde{\nabla}$ if $a_i = z$ or $\bar{\tilde{\nabla}}$ if $a_i = \bar{z}$. Then we define $\tilde{\nabla}^{\mathbf{a}}$ to be the operator $\tilde{\nabla}^{a_1} \ldots \tilde{\nabla}^{a_k}$ (if $\mathbf{a}$ is a $0$-tuple, define $\tilde{\nabla}^{\mathbf{a}}$ to be the identity). To prove the lemma, it suffices to bound the quantity $\vert \tilde{\nabla}^{\mathbf{a}} g \vert^2$.

We will proceed by induction on $k$. The case where $k=1$ is handled by Lemmas \ref{lemmaHigherOrder} and \ref{lemmaBarredDerivs}. For the general $k$ we may assume that there exists an index $l$ such that $a_l = z$, otherwise we are done by Lemma \ref{lemmaBarredDerivs}. Choose $l$ to be the greatest index such that $a_l = z$ and define $\mathbf{a}'$ to be the $(l-1)$-tuple containing the first $l-1$ entries of $\mathbf{a}$. If $l = k$, we observe that a bound on $\vert \tilde{\nabla}^{\mathbf{a}} g \vert^2$ will follow from a bound on $\vert \tilde{\nabla}^{\mathbf{a}'} \Psi \vert^2$.

We will introduce some notation: if $A$ and $B$ are tensors, let $A * B$ denote any linear combination of products of $A$ and $B$ formed by contractions with the metric $g$. If $l$ is not equal to $k$,  by commuting the covariant derivatives, we have 
\begin{eqnarray}
\tilde{\nabla}^{\mathbf{a}} g & = & \tilde{\nabla}^{\mathbf{a}'} \tilde{\nabla} \bar{\tilde{\nabla}}^{k-l} g \nonumber \\
& = &  \tilde{\nabla}^{\mathbf{a}'} \Big( \bar{\tilde{\nabla}} \tilde{\nabla} \bar{\tilde{\nabla}}^{k-l-1} g + \Rm(\tilde{g}_0) * \bar{\tilde{\nabla}}^{k-l-1} g \Big) \nonumber \\
& = &  \tilde{\nabla}^{\mathbf{a}'} \Big( \bar{\tilde{\nabla}}^{k-l} \tilde{\nabla} g + \bar{\tilde{\nabla}}^{k-l-1} \Rm(\tilde{g}_0) * g + \ldots + \Rm(\tilde{g}_0) *  \bar{\tilde{\nabla}}^{k-l-1} g \Big).
\end{eqnarray}
Hence a bound on $\vert \tilde{\nabla}^{\mathbf{a}} g \vert^2$ follows from a bound on $\vert \tilde{\nabla}^{\mathbf{a}'} \bar{\tilde{\nabla}}^{k-l} \Psi \vert^2$  since the other terms are bounded by Lemma \ref{lemmaTildeCurv} and induction. We will now complete the proof by bounding $\vert \tilde{\nabla}^{\mathbf{a}'} \Psi \vert^2$ for a general $(k-1)$-tuple $\mathbf{a}'$.

Notice that if every entry of $\mathbf{a}'$ is $z$ or if every entry of $\mathbf{a'}$ is $\bar{z}$, the proof is complete by Lemmas \ref{lemmaHigherOrder} and \ref{lemmaBarredDerivs}. Now let $r$ be the greatest index such that $a'_r = \bar{z}$ and define $\mathbf{a}''$ to be the $(r-1)$-tuple containing the first $r-1$ entries of $\mathbf{a}'$. If $r = k-1$, then
\begin{equation}\label{eqn1234}
\vert \tilde{\nabla}^{\mathbf{a}'} \Psi \vert^2 = \vert \tilde{\nabla}^{\mathbf{a}''} \bar{\tilde{\nabla}} \Psi \vert^2 = \vert \tilde{\nabla}^{\mathbf{a}''} (\Rm(g) - \Rm(\tilde{g}_0) )\vert^2 \leq \vert \tilde{\nabla}^{\mathbf{a}''} \Rm(g) \vert ^2 + \vert \tilde{\nabla}^{\mathbf{a}''} (\Rm(\tilde{g}_0) \vert^2.
\end{equation}
Notice that the second term in the right hand side of \eqref{eqn1234} is bounded by Lemma \ref{lemmaTildeCurv}. We observe that $\tilde{\nabla}^{\mathbf{a}''} \Rm(g)$ differs from ${\nabla}^{\mathbf{a}''} \Rm(g)$ only by terms involving $\Rm(g), \ldots, \nabla^{k-3}_\mathbb{R} \Rm(g)$ and $\Psi, \ldots, \tilde{\nabla}^{k-3}_\mathbb{R} \Psi$. By induction and Lemma \ref{lemmaShiDeriv}, we have a bound for $\tilde{\nabla}^{\mathbf{a}''} \Rm(g)$ and hence 
\begin{equation}\label{eqn12345}
\vert \tilde{\nabla}^{\mathbf{a}'} \Psi \vert^2 \leq C.
\end{equation}
If $r < l-1$, we commute the covariant derivatives,
\begin{eqnarray}\label{eqn4321}
\tilde{\nabla}^{\mathbf{a}'} \Psi & = & \tilde{\nabla}^{\mathbf{a}''} \bar{\tilde{\nabla}} \tilde{\nabla}^{l-1-r} \Psi \nonumber \\
& = & \tilde{\nabla}^{\mathbf{a}''} \Big( \tilde{\nabla} \bar{\tilde{\nabla}} \tilde{\nabla}^{l-r-2} \Psi + \Rm(\tilde{g}_0) * \tilde{\nabla}^{l-r-2} \Psi \Big) \nonumber \\
& = & \tilde{\nabla}^{\mathbf{a}''} \Big( \tilde{\nabla}^{l-r-1} \bar{\tilde{\nabla}} \Psi + \tilde{\nabla}^{l-r-2} \Rm(\tilde{g}_0) * \Psi + \ldots + \Rm(\tilde{g}_0) * \tilde{\nabla}^{l-r-2} \Psi \Big).
\end{eqnarray}
Notice that the norm of the first term of \eqref{eqn4321} is bounded as in \eqref{eqn12345} and the norms of the other terms are bounded by induction and Lemma \ref{lemmaTildeCurv}, completing the proof. 
\end{proof}

\section{Convergence}

In this section we will complete the proof of the main theorem by showing that $\omega(t)$ converges smoothly to $\omega_M$ as $t \to \infty$. Fix $z \in M$ and define a function $\rho_z$ on $E(z) := \pi^{-1}_M (z)$ by
\begin{equation}
\omega_0 |_{E(z)} + \frac{\sqrt{-1}}{2\pi} \partial \bar{\partial} \rho_z > 0, \ \ \ \Ric \left( \omega_0 |_{E(z)} + \frac{\sqrt{-1}}{2\pi} \partial \bar{\partial} \rho_z \right) = 0, \ \ \ \int_{E(z)} \rho_z \omega_0^n = 0.
\end{equation}
Note that since $\rho_z$ varies smoothly with $z$, we may define a smooth function $\rho(z,e)$ on $X$. Then
\begin{equation}
\omega_{flat} := \omega_0 + \frac{\sqrt{-1}}{2\pi} \partial \bar{\partial} \rho
\end{equation}
determines a closed $(1,1)$-form on $X$ with $[\omega_{flat}] = [\omega_0]$. Also, $\omega_{flat}$ may not be a metric on $X$, but $\omega_{flat} |_{E(z)}$ is a flat K\"ahler metric on each fiber.

We will now prove the following estimate for $\varphi$, which will give us the convergence of $\omega(t)$.

\begin{lemma}\label{lemmaPhiTo0}
There exists uniform $C > 0$ such that on $X \times [0, \infty)$,
\begin{equation}
\vert \varphi \vert \leq C(1 + t) e^{-t}.
\end{equation}
\end{lemma}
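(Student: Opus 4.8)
The plan is to compare $\varphi$ with the explicit potential $e^{-t}\rho$ and then run a maximum principle argument in the spirit of \cite{SW3, ST1}. Set
\[
\check\omega_t \;=\; (1-e^{-t})\,\omega_M + e^{-t}\,\omega_{flat} \;=\; \hat\omega_t + \frac{\sqrt{-1}}{2\pi}\partial\bar\partial\bigl(e^{-t}\rho\bigr).
\]
Since the fiber part of $\omega_{flat}$ is a flat metric and $1-e^{-t}\to 1$, a Schur-complement computation in product coordinates shows $\check\omega_t>0$ for all $t\ge t_0$, with $t_0$ depending only on the initial data. Writing $\phi=\varphi-e^{-t}\rho$, we get $\omega(t)=\check\omega_t+\frac{\sqrt{-1}}{2\pi}\partial\bar\partial\phi$, and substituting into \eqref{pcma} together with $\frac{d}{dt}(e^{-t}\rho)=-e^{-t}\rho$ shows that $\phi$ satisfies $\dot\phi=\log\frac{e^{nt}\omega^{m+n}}{\Omega}-\phi$. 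As $|e^{-t}\rho|\le Ce^{-t}$, it suffices to prove $|\phi|\le C(1+t)e^{-t}$, and by Lemma \ref{lemmaPhiBounds}(a) we may restrict to $t\ge t_0$.

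The key point, and the step that genuinely uses the product structure and the flatness of the fibers, is the identity $\Omega=\binom{m+n}{m}\,\omega_M^m\wedge\omega_{flat}^n$. Indeed, since $\omega_M$ is pulled back from $M$, only the fiber part of $\omega_{flat}$ contributes to $\omega_M^m\wedge\omega_{flat}^n$; that part is a flat metric on each fiber $E(z)$, so its volume density is pluriharmonic, hence constant, on $E(z)$, and because $X$ is a product with $[\omega_{flat}]=[\omega_0]$ the integral $\int_{E(z)}\omega_{flat}^n=\int_{E(z)}\omega_0^n$ is independent of $z$, so this constant is independent of $z$ as well. Therefore $\frac{\sqrt{-1}}{2\pi}\partial\bar\partial\log\bigl(\omega_M^m\wedge\omega_{flat}^n\bigr)=\frac{\sqrt{-1}}{2\pi}\partial\bar\partial\log\det g_M=\omega_M$, and since $\binom{m+n}{m}\int_X\omega_M^m\wedge\omega_{flat}^n=\binom{m+n}{m}\int_X\omega_M^m\wedge\omega_0^n=\int_X\Omega$, the normalization \eqref{eqnOmega} forces the claimed equality. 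Expanding $\check\omega_t^{m+n}$ and using $\omega_M^{m+1}=0$, the terms involving $\omega_M^{m+n-j}\wedge\omega_{flat}^j$ with $j<n$ vanish, the term $j=n$ equals $(1-e^{-t})^m\Omega$, and each remaining term ($j\ge n+1$) carries a factor $e^{(n-j)t}$ times a fixed form bounded by a multiple of $\Omega$; hence
\[
\frac{e^{nt}\check\omega_t^{m+n}}{\Omega}=1+O(e^{-t}),\qquad\text{so}\qquad \left|\log\frac{e^{nt}\check\omega_t^{m+n}}{\Omega}\right|\le Ce^{-t}\quad(t\ge t_0).
\]

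To finish, fix $A$ large and consider $\psi=\phi-A(1+t)e^{-t}$ on $X\times[t_0,T]$. At an interior maximum of $\psi$ one has $\frac{\sqrt{-1}}{2\pi}\partial\bar\partial\phi\le 0$, hence $\omega\le\check\omega_t$ and $\frac{e^{nt}\omega^{m+n}}{\Omega}\le\frac{e^{nt}\check\omega_t^{m+n}}{\Omega}\le 1+Ce^{-t}$, so $\log\frac{e^{nt}\omega^{m+n}}{\Omega}\le Ce^{-t}$; combined with $0\le\dot\psi=\log\frac{e^{nt}\omega^{m+n}}{\Omega}-\phi+Ate^{-t}$ this gives $\phi\le(C+At)e^{-t}$ there, hence $\psi\le(C-A)e^{-t}\le 0$ once $A\ge C$. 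Choosing $A$ also large enough that $\psi\le 0$ on $\{t=t_0\}$ (using Lemma \ref{lemmaPhiBounds}(a)), the maximum principle yields $\psi\le 0$ on $X\times[t_0,\infty)$, i.e. $\phi\le A(1+t)e^{-t}$. The symmetric argument applied to $\phi+A(1+t)e^{-t}$ at an interior minimum (where $\omega\ge\check\omega_t$) gives $\phi\ge-A(1+t)e^{-t}$. Thus $|\varphi|\le|\phi|+e^{-t}|\rho|\le C(1+t)e^{-t}$ on $X\times[t_0,\infty)$, and enlarging $C$ to cover $t\in[0,t_0]$ via Lemma \ref{lemmaPhiBounds}(a) completes the proof. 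The only delicate ingredient is the identity $\Omega=\binom{m+n}{m}\omega_M^m\wedge\omega_{flat}^n$; the rest is routine.
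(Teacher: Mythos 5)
Your proof is correct and takes essentially the same route as the paper: substitute $\phi=\varphi-e^{-t}\rho$, use the identity $\Omega=\binom{m+n}{m}\,\omega_M^m\wedge\omega_{flat}^n$, expand $\check\omega_t^{m+n}$ using $\omega_M^{m+1}=0$, and apply the maximum principle with the barrier $A(1+t)e^{-t}$ (the paper's $Q_1=e^tQ-At$ is the same barrier in disguise). You have usefully supplied the verification of $\Omega=\binom{m+n}{m}\,\omega_M^m\wedge\omega_{flat}^n$, which the paper only asserts; the preliminary claim that $\check\omega_t>0$ for $t\ge t_0$ is not actually needed, since at the relevant extremum one automatically has $\check\omega_t\ge\omega>0$ (respectively $\check\omega_t\le\omega$ with $e^{nt}\check\omega_t^{m+n}/\Omega\ge 1-Ce^{-t}>0$ for $t\ge t_0$).
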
 

\begin{proof}
This proof follows similarly as in \cite{SW3}. To simplify notation, let $b_k$ denote the binomial coefficient $b_k = \tbinom{m+n}{k}.$ Then using \eqref{eqnOmega} and the fact that $[\omega_{flat}] = [\omega_0]$, 
\begin{equation}
\Omega = b_m \omega_M^m \wedge \omega_{flat}^n.
\end{equation}
We define the quantity $Q = \varphi - e^{-t} \rho$ and calculate its evolution
\begin{eqnarray}
\frac{\partial}{\partial t} Q & = & \log \frac{e^{nt} \left( \hat{\omega}_t + \frac{\sqrt{-1}}{2\pi} \partial \bar{\partial} \varphi \right)^{m+n}}{b_m  \omega_M^m \wedge \omega_{flat}^n} - \varphi + e^{-t} \rho \nonumber \\
& = & \log \frac{e^{nt}\left( e^{-t} \omega_{flat} + \left( 1 - e^{-t} \right) \omega_M + \sqrt{-1} \partial \bar{\partial} Q\right)^{m+n}}{b_m  \omega_M^m \wedge \omega_{flat}^n} - Q.
\end{eqnarray}
Now let $Q_1 = e^t Q - A t$ where $A$ is a constant to be determined later. Suppose $Q_1$ attains its maximum at a point $(z_0, t_0)$ with $t_0 > 0$, then at that point
\begin{eqnarray}
0 & \leq & \frac{\partial}{\partial t} Q_1 \leq e^t \log \frac{e^{nt}\left( e^{-t} \omega_{flat} + \left( 1 - e^{-t} \right) \omega_M\right)^{m+n}}{b_m \omega_M^m \wedge \omega_{flat}^n} - A \nonumber \\
& = & e^t \log \frac{e^{nt}\left( b_m e^{-nt}(1-e^{-t})^m \omega^m_M \wedge \omega_{flat}^n + \ldots + e^{-\left(m+n\right)t} \omega_{flat}^{m+n}\right)}{b_m \omega_M^m \wedge \omega_{flat}^n} - A \nonumber \\
& \leq & e^t \log \left( 1 + C_1 e^{-t} + \ldots + C_m e^{-mt} \right) - A \nonumber \\
& \leq & C - A.
\end{eqnarray}
If we choose $A > C$, we obtain a contradiction and hence $Q_1$ must attain its maximum at $t = 0$. This gives the estimate $\varphi \leq C \left( 1 + t \right) e^{-t}$, and we can similarly obtain a lower bound.
\end{proof}

We may now complete the proof of the main theorem.

\begin{proof}
Using Lemma \ref{lemmaHigherOrder2}, Lemma \ref{lemmaPhiTo0} and the definition of $\omega(t)$, we immediately see that $\omega(t) \to \omega_M$ in $C^{\infty}$ as $t \to \infty$ proving part (a).

We will restrict Lemma \ref{lemmaGradEst} to $E(z)$ using a method similar to that in \cite{To}. Choose complex coordinates $x^{m+1}, \ldots, x^{m+n}$ on $E$ so that $g_E$ is the identity and $g|_E$ is diagonal with entries $\lambda_{m+1}, \ldots, \lambda_{m+n}$. Then choose complex coordinates $x^1, \ldots, x^m$ on $X$ such that at a point $p$ the space spanned by $\frac{\partial}{\partial x^1} \vert_p, \ldots, \frac{\partial}{\partial x^m} \vert_p$ is orthogonal to the space spanned by $\frac{\partial}{\partial x^{m+1}} \vert_p, \ldots, \frac{\partial}{\partial x^{m+n}} \vert_p$ with respect to $g$. In this coordinate system, $g$ is diagonal with entries $\lambda_1, \ldots, \lambda_{m+n}$, and so
\begin{eqnarray}\label{restrictionBound1}
\left\vert \nabla_E g|_{E(z)} \right\vert^2_{g|_{E(z)}}  & =  & \sum_{i,j,k = m+1}^{m+n} \frac{1}{\lambda_i \lambda_j \lambda_k} \tilde{\nabla}_k g_{i\bar{j}}|_{E(z)} \overline{ \tilde{\nabla}_k g_{i\bar{j}}|_{E(z)} }\nonumber \\
& \leq & \sum_{i,j,k = 1}^{m+n} \frac{1}{\lambda_i \lambda_j \lambda_k} \tilde{\nabla}_k g_{i\bar{j}} \overline {\tilde{\nabla}_k g_{i\bar{j}} }\nonumber \\
& = & \vert \tilde{\nabla} g \vert^2 \leq C.
\end{eqnarray}
By restricting the uniform equivalence of $g$ and $\tilde{g}_t$ to $E(z)$, we see that $g|_{E(z)}$ is uniformly equivalent to $e^{-t} g_E$. Using this fact coupled with \eqref{restrictionBound1} we estimate the derivative of $e^t g|_{E(z)}$.
\begin{eqnarray}\label{restrictionBound2}
\vert \nabla_E e^t g|_{E(z)} \vert^2_{g_E} &  = & e^{2t} g_E^{\bar{j}i} g_E^{\bar{l} k} g_E^{\bar{q} p} \nabla_{E,i} (g|_{E(z)})_{k\bar{q}} \overline{ \nabla_{E, j} (g|_{E(z)})_{l\bar{p}} }\nonumber \\
& \leq & C e^{-t} (g_E)^{\bar{j}i} (g_E)^{\bar{l}k} (g_E)^{\bar{q}p}  \nabla_{E,i} (g|_{E(z)})_{k\bar{q}}\overline{ \nabla_{E, j} (g|_{E(z)})_{l\bar{p}} } \nonumber \\
& = & C e^{-t} \left\vert \nabla_E g|_{E(z)} \right\vert^2_{g|_{E(z)}} \nonumber \\
& \leq & C' e^{-t}.
\end{eqnarray}
Similarly, we obtain estimates for the $k$-th order derivative of $e^t g|_{E(z)}$:
\begin{equation}\label{restrictionBoundk}
\vert \nabla^k_E e^t g|_{E(z)} \vert^2_{g_E} \leq C e^{-kt}.
\end{equation}
We constructed $g_{flat}$ to be a flat metric when restricted to the complex torus $E(z)$, and so it is given by a constant Hermitian metric on $\mathbb{C}^n$. Using a standard coordinate system for $E(z)$, we see that $\nabla^k_E g_{flat} = 0$ for all $k$, thus
\begin{equation}\label{convBoundk}
\vert \nabla^k_E ( e^t g|_{E(z)} - g_{flat}|_{E(z)} ) \vert^2_{g_E} \leq C e^{-kt}.
\end{equation}

It remains to show that $e^t g|_{E(z)} \to g_{flat}|_{E(z)}$ in $C^0(E(z))$. Define a function $\psi$ on $E(z)$ by
\begin{equation}
\psi = e^{-t} \varphi|_{E(z)} - \rho_z.
\end{equation}
Letting $\Delta_E$ denote the Laplacian with respect to $g_E$,
\begin{equation}\label{laplaceFiberPsi}
\Delta_E \psi = \tr_{g_E} ( e^t g|_{E(z)} - g_{flat}|_{E(z)}).
\end{equation}
Combining \eqref{convBoundk} with $k = 1$ and \eqref{laplaceFiberPsi} gives the estimate
\begin{equation}\label{psiDecay}
\vert \nabla_E \Delta_E \psi \vert^2_{g_E} \leq C e^{-t}.
\end{equation}
Since $\int_E \Delta_E \psi \omega_E^n = 0$, for each time $t$ there exists a point $y(t)$ in $E(z)$ so that $\psi(y(t),t) = 0$. Applying the Mean Value Theorem with \eqref{psiDecay} shows that
\begin{equation}\label{finishMVT}
\vert \Delta_E \psi (x,t) \vert^2_{g_E} = \vert \Delta_E \psi (x,t) - \Delta_E \psi(y(t) ,t) \vert^2_{g_E} \to 0
\end{equation}
as $t \to \infty$. \eqref{convBoundk}, \eqref{laplaceFiberPsi} and  \eqref{finishMVT} show that $e^t g|_{E(z)} \to g_{flat}|_{E(z)}$ in $C^\infty$ on $E(z)$, completing the proof of the main theorem. 
\end{proof}

\section{Acknowledgments}

The author would like to thank his thesis advisor Ben Weinkove for numerous helpful discussions and also Valentino Tosatti for useful comments and suggestions. The author is also grateful for the support and funding of the the San Diego ARCS foundation. 

The contents of this paper will appear in the author's forthcoming PhD thesis.

\bigskip
\noindent
Mathematics Department, University of California, San Diego, 9500 Gilman Drive \#0112, La Jolla CA 92093

\end{document}